\numberwithin{equation}{section}
\def\PP{\mathbb{P}}
\def\QQ{\mathbb{Q}}
\def\RR{\mathbb{R}}
\def\EE{\mathbb{E}}
\def\11{\mathbbm{1}}
\def\E{\mathbb{E}}
\def\P{\mathbb{P}}
\def\R{\mathbb{R}}
\def\N{\mathbb{N}}
\def\d{\partial}
\def\tT{\widetilde{T}}
\def\tX{\widetilde{X}}
\newtheorem{thm}{Theorem}[section]
\newtheorem{lem}[thm]{Lemma}
\newtheorem{cor}[thm]{Corollary}
\newtheorem{prop}[thm]{Proposition}
\theoremstyle{remark}
\newtheorem{rem}{Remark}
    \def\restriction#1#2{\mathchoice
                  {\setbox1\hbox{${\displaystyle #1}_{\scriptstyle #2}$}
                  \restrictionaux{#1}{#2}}
                  {\setbox1\hbox{${\textstyle #1}_{\scriptstyle #2}$}
                  \restrictionaux{#1}{#2}}
                  {\setbox1\hbox{${\scriptstyle #1}_{\scriptscriptstyle #2}$}
                  \restrictionaux{#1}{#2}}
                  {\setbox1\hbox{${\scriptscriptstyle #1}_{\scriptscriptstyle #2}$}
                  \restrictionaux{#1}{#2}}}
    \def\restrictionaux#1#2{{#1\,\smash{\vrule height .8\ht1 depth .85\dp1}}_{\,#2}}
\begin{document}

\title{Uniform convergence of conditional distributions for absorbed one-dimensional diffusions}

\author{Nicolas Champagnat$^{1,2,3}$, Denis Villemonais$^{1,2,3}$}

\footnotetext[1]{Universit\'e de Lorraine, IECL, UMR 7502, Campus Scientifique, B.P. 70239,
  Vand{\oe}uvre-l\`es-Nancy Cedex, F-54506, France}
\footnotetext[2]{CNRS, IECL, UMR 7502,
  Vand{\oe}uvre-l\`es-Nancy, F-54506, France} \footnotetext[3]{Inria, TOSCA team,
  Villers-l\`es-Nancy, F-54600, France.\\
  E-mail: Nicolas.Champagnat@inria.fr, Denis.Villemonais@univ-lorraine.fr}

\maketitle

\begin{abstract}
  This article studies the quasi-stationary behaviour of absorbed one-dimensional diffusions. We obtain necessary and sufficient
  conditions for the exponential convergence to a unique quasi-stationary distribution in total variation, uniformly with respect to
  the initial distribution. An important tool is provided by one dimensional strict local martingale diffusions coming down from
  infinity. We prove under mild assumptions that their expectation at any positive time is uniformly bounded with respect to the
  initial position. We provide several examples and extensions, including the sticky Brownian motion and some one-dimensional
  processes with jumps.
\end{abstract}

\noindent\textit{Keywords:} {one-dimensional diffusions; absorbed process; quasi-stationary distribution; uniform exponential mixing
  property; strict local martingales; one dimensional processes with jumps.}

\medskip\noindent\textit{2010 Mathematics Subject Classification.} Primary: {60J60; 60J70; 37A25; 60B10; 60F99}. Secondary: {60G44; 60J75}.

\section{Introduction}
\label{sec:intro}

This article studies the quasi-stationary behaviour of general one-di\-men\-sio\-nal diffusion processes in an interval $E$ of $\RR$,
absorbed at its finite boundaries. When the process is absorbed, it is sent to some cemetary point $\d$. We recall that a
\emph{quasi-stationary distribution} for a continuous-time Mar\-kov process $(X_t,t\geq 0)$ on the state space $E\cup\{\d\}$ absorbed
in $\d$, is a probability measure $\alpha$ on $E$ such that
$$
\PP_\alpha(X_t\in\cdot\mid t<\tau_\partial)=\alpha(\cdot),\quad \forall t\geq 0,
$$
where $\PP_\alpha$ denotes the distribution of the process $X$ given that $X_0$ has distribution $\alpha$, and
$$
\tau_\d:=\inf\{t\geq 0: X_t=\d\}.
$$
We refer to~\cite{meleard-villemonais-12,vanDoorn2013,pollett-11} for general introductions to the topic.

We consider a general diffusion $(X_t)_{t\geq 0}$ on $[a,b)$ with $a<b\leq +\infty$ absorbed at $\d=a$ in a.s.\ finite time $\tau_\d$
and such that $b$ is an entrance or natural boundary. We are interested in the existence of a quasi-stationary distribution $\alpha$
on $(a,b)$ such that, for all probability measures $\mu$ on $(a,b)$,
\begin{align}
\label{eq:QLD}
\left\|\PP_\mu(X_t\in \cdot\mid t<\tau_\partial)-\alpha\right\|_{TV}\leq C e^{-\gamma t},\ \forall t\geq 0
\end{align}
where $\|\cdot\|_{TV}$ is the total variation norm and $C$ and $\gamma$ are positive constants. We prove that this property is
\emph{equivalent} to the following condition:
\begin{description}
\item[\textmd{(B)}] The boundary $b$ is entrance for $X$ and there exists $t,A>0$ such that
  \begin{equation*}
    \P_x(\tau_\d> t)\leq As(x),\quad\forall x>0,
  \end{equation*}
  where $s$ is the scale function of the diffusion $X$.
\end{description}
In the sequel, without loss of generality, we will focus on the case of a diffusion on natural scale on $E\cup\{\d\}=[0,+\infty)$ and modify
(B) accordingly.

The exponential convergence~\eqref{eq:QLD} entails number of additional properties. First, it implies the uniqueness of the
quasi-stationary distribution $\alpha$ which attracts any initial distribution. Second,~\eqref{eq:QLD} implies
~\cite[Theorem~2.1]{ChampagnatVillemonais2016U} that the following convergence is uniform in $x\in(a,b)$:
\begin{align}
  \label{eq:convergence-to-eta}
  \eta(x)=\lim_{t\rightarrow\infty} \frac{\P_x(t<\tau_\d)}{\P_\alpha(t<\tau_\d)}=\lim_{t\rightarrow +\infty} e^{\lambda_0 t}\P_x(t<\tau_\d).
\end{align}
Note that we used the existence of a constant $\lambda_0>0$ such that
$$
\PP_\alpha(t<\tau_\d)=e^{-\lambda_0 t},\quad\forall t\geq 0,
$$
which is a well-known property of quasi-stationnary distributions. Moreover~\cite[Prop.\,2.3]{ChampagnatVillemonais2016}, $\eta$
belongs to the domain of the infinitesimal generator $L$ of $X$ on the Banach space of bounded measurable functions on $[a,b)$
equipped with the $L^\infty$ norm and
\begin{align*}
  L\eta=-\lambda_0\eta.
\end{align*}
Third, it implies the existence and the exponential ergodicity of the associated $Q$-process, defined as the process $X$ conditionned
to never be extinct (see~\cite[Thm.\,3.1]{ChampagnatVillemonais2016} for a precise definition). The convergence of the conditional
laws of $X$ to the law of the $Q$-process holds also uniformly in total variation
norm~\cite[Theorem~2.1]{ChampagnatVillemonais2016U}, which also entails conditional ergodic
properties~\cite[Corollary~2.2]{ChampagnatVillemonais2016U}.

The second main result of this paper is the fact that Condition~(B) is closely related to properties of strict local martingale
diffusions $(Z_t,t\geq 0)$ on $[0,\infty)$. We recall that $(Z_t,t\geq 0)$ is called a strict local martingale if $\EE_x(Z_t)<x$ for all $x,t>0$. We say that $(Z_t,t\geq 0)$ is a \emph{strict martingale in the strong sense} if, for all
$t>0$,
\begin{equation}
  \label{eq:strong-strict}
  \sup_{x>0}\EE_x(Z_t)<\infty.
\end{equation}
We prove, under a mild control on the oscillations of the speed measure near $+\infty$, that the property~\eqref{eq:strong-strict} is true for all $t>0$. This new result has its own interest in
the theory of strict local martingales~\cite{Protter2013} and applies to nearly all strict local martingale diffusions on
$[0,\infty)$. For instance, it is well-known that the solution on $(0,+\infty)$ to the
SDE
\begin{align*}
  dZ_t=Z_t^\alpha dB_t
\end{align*}
is a strict local martingale if and only if $\alpha>1$. Our result implies that, for
all $\alpha>1$ and all $t>0$,~\eqref{eq:strong-strict} holds true, and hence that $\EE_x(Z_t)<x$ for all $x>0$ is actually
equivalent to $\sup_{x>0}\EE_x(Z_t)<\infty$. This equivalence is also true for example for the SDE
\begin{align*}
  dZ_t=Z_t\ln(Z_t)^\beta dB_t
\end{align*}
which is a (strong) strict local martingale iff $\beta>1/2$.

We deduce from these results an explicit criterion ensuring Condition~(B), which shows that it is satisfied for nearly all diffusions
coming down from infinity and hitting 0 in a.s.\ finite time. However, in several cases, Condition~(B) is simple enough to be checked using elementary
probability tools, and to apply to many other settings (processes with jumps, piecewise deterministic
paths~\cite{ChampagnatVillemonais2016} or time-inhomogeneous processes~\cite{champagnat-villemonais-16c}). Hence we also provide a
simple probabilistic criterion and show how it applies in an example of one-dimensional process with jumps.

We also provide an explicit formula relating the speed measure of a diffusion on natural scale and its quasi-stationary distribution.
Conversely, we show that, given a probability measure $\alpha$ on $(0,\infty)$ satisfying suitable conditions and a positive constant
$\lambda_0$, there exists a unique diffusion on natural scale, with explicit speed measure, admitting $\alpha$ as unique
quasi-stationary distribution with associated absorbing rate $\lambda_0$. We also use this explicit formula to compute the
quasi-stationary distribution of the sticky Brownian motion on $(-1,1)$, absorbed at $\{-1,1\}$.
\bigskip

The usual tools to prove convergence as in~\eqref{eq:QLD} for conservative Markov processes involve coupling arguments (Doeblin's
condition, Dobrushin coupling constant, see e.g.~\cite{Meyn2009}). However, a diffusion conditioned not to hit 0 before a given time
$t>0$ is a time-inhomogeneous diffusion process with a singular, non-explicit drift for which these methods fail. For instance, the
solution $(X_s)_{s\geq 0}$ to the SDE
\begin{align*}
  dX_s=\sigma(X_s)dB_s+b(X_s)ds
\end{align*}
with smooth coefficients and conditioned not to hit 0 up to a time $t>0$ has the law of the solution $(X^{(t)}_s)_{s\in[0,t]}$ to the
SDE
\begin{align*}
dX^{(t)}_s=b(X^{(t)}_s)ds+\sigma(X^{(t)}_s)dB_s+\sigma(X^{(t)}_s)\,\left[\nabla \ln \P_{\cdot}(t-s<\tau_\d)\right](X^{(t)}_s)ds.
\end{align*}
Since $\P_{x}(t-s<\tau_\d)$ vanishes when $x$ converges to 0, the drift term in the above SDE is singular and cannot in general be
quantified.

Hence, convergence of conditioned diffusion processes have been obtained up to now using (sometimes involved) spectral theoretic
arguments, which most often require regularity of the coefficients (see for
instance~\cite{Cattiaux2009,littin-12,kolb-steinsaltz-12,kolb-hening-2104,miura-14}) and are specific to time homogeneous
one-dimensional diffusion processes. None of the above references provide uniform convergence with respect to the initial
distribution. In particular, all the consequences of~\eqref{eq:QLD} listed above are original results of this paper, even for
solutions to SDEs with regular coefficients. Moreover, the uniformity of the convergence with respect to the initial distribution is
important in applications, as discussed in~\cite[Ex.\,2]{meleard-villemonais-12}. Concerning the sole existence and uniqueness of a
quasi-stationnary distribution, in the few cases where our criterion is not satisfied, we refer the reader
to~\cite{littin-12,kolb-steinsaltz-12,kolb-hening-2104}. An interesting open problem is to prove that~\eqref{eq:QLD}, and hence
Condition~(B), actually holds true in these cases.

The paper is organized as follows. In Section~\ref{sec:construction}, we precisely define the absorbed diffusion processes and recall
their construction as time-changed Brownian motions. Section~\ref{sec:pties-diff} contains several results on strict local
martingales, including the criterion ensuring~\eqref{eq:strong-strict}. We give in Section~\ref{sec:1d_diffusions} the statements of
all our main results on quasi-stationnary distributions (Subsections~\ref{sec:QSD-without-killing} and~\ref{sec:dalpha-dm}) and on
criteria ensuring Condition~(B) (Subsections~\ref{sec:condition-(B)} and~\ref{sec:strict-local-martingales}). We also study in
Subsection~\ref{sec:examples} several examples including sticky Brownian motion and diffusions with jumps. Finally, we give in
Section~\ref{sec:proof-QSD-diff} the proofs of the main results of Section~\ref{sec:1d_diffusions}.

\section{Some reminders on diffusion processes}
\label{sec:construction}

In this section, we recall the construction of general diffusion processes $(X_t,t\geq 0)$ on $[0,+\infty)$, absorbed at
$\partial=0$. The typical situation corresponds to stochastic population dynamics of continuous densities with possible extinction.
 
The distribution of $X$ given $X_0=x\in[0,\infty)$ will be denoted $\PP_x$, and the semigroup of the process is given by
$P_tf(x)=\EE_x[f(X_t)]$ for all bounded measurable $f:[0,\infty)\rightarrow\RR$ and all $x\geq 0$.

A stochastic process $(X_t,t\geq 0)$ on $[0,+\infty)$ is called a diffusion if it has a.s.\ continuous paths in $[0,\infty)$,
satisfies the strong Markov property and is \emph{regular}. By regular, we mean that for all $x\in(0,\infty)$ and $y\in[0,\infty)$,
$\P_x(T_y<\infty)>0$, where $T_y$ is the first hitting time of $y$ by the process $X$. Examples of such processes are given by (weak)
solutions of stochastic differential equations (see Section~\ref{sec:examples}).

Given such a process, there exists a continuous and strictly increasing function $s$ on $[0,\infty)$, called the \emph{scale
  function}, such that $(s(X_{t\wedge T_0}),t\geq 0)$ is a local martingale~\cite{freedman-83}. The stochastic process $(s(X_t),t\geq
0)$ is itself a diffusion process with identity scale function. Since we shall assume that 0 is regular or exit and that $T_0<\infty$
a.s., we necessarily have $s(0)>-\infty$ and $s(\infty)=\infty$, and we can assume $s(0)=0$. Hence, replacing $(X_t,t\geq 0)$ by
$(s(X_t),t\geq 0)$, we can assume without loss of generality that $s(x)=x$. Such a process is said to be on \emph{natural scale} and
satisfies for all $0<a<b<\infty$,
$$
\P_x(T_a<T_b)=\frac{b-x}{b-a}.
$$
To such a process $X$, one can associate a unique locally finite positive measure $m(dx)$ on $(0,+\infty)$, called the \emph{speed
  measure} of $X$, which gives positive mass to any open subset of $(0,+\infty)$ and such that $X_t=B_{\sigma_t}$ for all $t\geq 0$
for some standard Brownian motion $B$, where
\begin{align}
  \label{eq:As}
  \sigma_t=\inf\left\{s>0:A_s>t\right\}, \text{ with }A_s=\int_0^\infty L^x_s\, m(dx).
\end{align}
and $L^x$ is the local time of $B$ at level $x$. Conversely, any such time change of a Brownian motion defines a regular diffusion on
$[0,\infty)$~\cite[Thm.\,23.9]{kallenberg-02}.

\medskip
We will use in the sequel the assumption that
\begin{align}
  \label{eq:ymdy_integrable}
  \int_0^\infty y\,m(dy)<\infty.
\end{align}
Since the measure $m$ is locally finite on $(0,\infty)$, this assumption reduces to local integrability of $y\,m(dy)$ near 0 and
$+\infty$. These two conditions are respectively equivalent to $\P_x(\tau_\d<\infty)=1$ for all
$x\in(0,\infty)$~\cite[Thm.~23.12]{kallenberg-02}, where $\tau_\d:=T_0$ is the first hitting time of $\d=0$ by the process $X$, and to
the existence of $t>0$ and $y>0$ such that
$$
\inf_{x>y}\P_x(T_y<t)>0.
$$
This means that $\infty$ is an \emph{entrance boundary} for $X$~\cite[Thm.\,23.12]{kallenberg-02}, or equivalently that the process
$X$ \emph{comes down from infinity}~\cite{Cattiaux2009}.

We recall that $0$ is absorbing for the process $X$ iff $\int_0^1\,m(dx)=\infty$~\cite[Thm.\,23.12]{kallenberg-02}, which is not
necessarily the case under the assumption that $\int_0^1 y\,m(dy)<\infty$. Therefore, we modify the definition of $X$ so that $\d$
becomes an absorbing point as follows:
\begin{equation}
  \label{eq:def-diff-no-kill}
  X_t=
  \begin{cases}
    B_{\sigma_t} & \text{if\ }0\leq t<\tau_\d \\
    \d & \text{if\ }t\geq\tau_\d.
  \end{cases}
\end{equation}
Note that this could be done equivalently by replacing $m$ by $m+\infty\delta_0$.

With this definition, $X$ is a local martingale.
Note that $A_s$ is strictly increasing since $m$ gives positive mass to any open interval, and hence $\sigma_t$ is continuous and
\begin{equation}
  \label{eq:lien-tau-d-brownien}
  \sigma_{\tau_\d}=T^B_0,  \quad\text{or, equivalently,}\quad \tau_\d=A_{T^B_0},
\end{equation}
where $T^B_x$ is the first hitting time of $x\in\RR$ by the process $(B_t,t\geq 0)$. In particular, $X_t=B_{\sigma_t\wedge T_0^B}$, $\forall t\geq 0$.


\section{Strict local martingales}
\label{sec:pties-diff}

This section is devoted to the study of one-dimensional strict local martingale diffusions on $[0,\infty)$ coming down from infinity
and their link with absorbed one-dimensional diffusion processes. We recall that a nonnegative, real local martingale $(Z_t,t\geq 0)$
with $Z_0=z\geq 0$ is called strict if it is not a martingale, i.e.\ if $\E(Z_t)<z$ for at least one $z\in (0,\infty)$. If
$(Z_t,t\geq 0)$ is a diffusion on $[0,\infty)$ on natural scale with speed measure $\widetilde{m}$, then
\begin{equation}
  \label{eq:str-loc-mg}
  (Z_t,t\geq 0)\mbox{ is a strict local martingale }\Longleftrightarrow\ \int_1^\infty y \,\widetilde{m}(dy)<\infty,  
\end{equation}
i.e.\ if $+\infty$ is an entrance boundary~\cite{kitani-06}. The notion of strict local martingales is of great importance in the
theory of financial bubbles. We refer the interested reader to Protter~\cite{Protter2013} and references therein.

The goal of this section is to study a stronger notion of strict martingale. We will say that a nonnegative, real local martingale
$(Z_t,t\geq 0)$ is a \emph{strict martingale in the strong sense} if, for all $t>0$, there exists $A_t<\infty$ such that
$$
\E_z(Z_t)\leq A_t,\ \forall z>0.
$$

\subsection{Strict local martingales in a strong sense}
\label{sec:local-martingale}

The next result gives a sufficient criterion for a strict local martingale diffusion process $Z$ on $[0,\infty)$ to be a strictly
local martingale in the strong sense.

\begin{thm}
  \label{thm:h-transfo-3-v2}
  Let $\widetilde{m}$ be a locally finite measure on $(0,\infty)$ giving positive mass to any open subset of $(0,\infty)$, and let
  $Z$ be a diffusion process on $[0,\infty)$ stopped at 0 on natural scale with speed measure $\widetilde{m}$. Assume
  \begin{equation}
    \label{eq:matsumoto-bis}
    \int_1^\infty\frac{1}{x}\,\sup_{y\geq x}\left(y\int_y^\infty \widetilde{m}(dz)\right)\,dx<\infty.
  \end{equation}
  Then, $(Z_t)_{t\geq 0}$ is a strict local martingale in the strong sense.
\end{thm}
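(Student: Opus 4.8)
The plan is to bound $\E_z(Z_t)$ by exploiting the representation of $Z$ as a time-changed Brownian motion. Since $Z$ is on natural scale with speed measure $\widetilde m$ and $+\infty$ is entrance (condition \eqref{eq:matsumoto-bis} certainly forces $\int_1^\infty y\,\widetilde m(dy)<\infty$), we have $Z_t=B_{\widetilde\sigma_t\wedge T^B_0}$ where $\widetilde\sigma_t=\inf\{s:\widetilde A_s>t\}$ and $\widetilde A_s=\int_0^\infty L^x_s\,\widetilde m(dx)$, with $B$ a standard Brownian motion started at $z$. The key observation is that $\E_z(Z_t)=\E_z(B_{\widetilde\sigma_t\wedge T^B_0})$, and since $(B_{s\wedge T^B_0})$ is a bounded-below local martingale started at $z$, the defect from the martingale property is controlled by how much mass the process loses to large excursions. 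I expect to rewrite $\E_z(Z_t)$ via the expected supremum or via an exit-time decomposition, reducing the problem to estimating $\E_z(B_{\widetilde\sigma_t})$ on the event that $B$ reaches high levels before hitting $0$.

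First I would set up the comparison at high levels. For a level $N$, split according to whether $B$ hits $N$ before time-changed-clock-time $t$. Writing $T^B_N$ for the hitting time of $N$, optional stopping of the stopped Brownian motion gives $z=\E_z(B_{\widetilde\sigma_t\wedge T^B_N\wedge T^B_0})=\E_z(Z_t;\,T^B_N>\widetilde\sigma_t\wedge T^B_0)+N\,\P_z(T^B_N\le\widetilde\sigma_t\wedge T^B_0)$. Letting $N\to\infty$, the strictness manifests as the nonvanishing of $\lim_N N\,\P_z(T^B_N\le\widetilde\sigma_t\wedge T^B_0)$, which equals $z-\E_z(Z_t)$; this is the ``mass escaping to infinity in the time change.'' So to bound $\E_z(Z_t)$ from \emph{above} uniformly, it is equivalent to bound this escaping mass from \emph{below} uniformly in $z$, i.e. to show $\inf_z[z-\E_z(Z_t)]$ does not vanish too slowly—more precisely that $\E_z(Z_t)\le A_t$ for large $z$.

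The heart of the argument is to control, uniformly in $z$, the probability that the Brownian clock $\widetilde A$ runs slowly enough that $Z$ can reach a high level $y$ within real time $t$. The additive functional $\widetilde A_{T^B_y}$ (the real time accumulated before $B$ first reaches $y$) is, by the occupation-time formula, an increasing functional whose growth is governed by $\widetilde m$ on $[z,y]$; condition \eqref{eq:matsumoto-bis} is designed precisely so that $\int^\infty y\int_y^\infty\widetilde m(dz)$-type tails are summable on a logarithmic scale, which should translate into a summable bound for $\P_z(T^B_{2^{k+1}}\circ\text{clock}\le t)$ over dyadic levels $2^k$. Concretely, I would estimate $\E_z(Z_t\mathbbm 1_{Z_t\in[2^k,2^{k+1})})\le 2^{k+1}\P_z(Z_t\ge 2^k)$ and then bound $\P_z(Z_t\ge 2^k)$ by $\P(\widetilde A_{T^B_{2^k}}<t)$, using that reaching level $2^k$ requires the clock to have advanced by less than $t$. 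The quantity $\widetilde A_{T^B_{2^k}}$ has an explicit lower bound in terms of excursion theory: waiting at each level near $2^{k-1}$ accumulates speed-measure time $\gtrsim 2^{k-1}\int_{2^{k-1}}^\infty\widetilde m(dz)$, and the supremum-type expression in \eqref{eq:matsumoto-bis} is exactly the right uniform control to make $\sum_k 2^{k+1}\P(\widetilde A_{T^B_{2^k}}<t)$ converge by a Borel–Cantelli / Markov-inequality argument on the clock.

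\textbf{The main obstacle} will be making rigorous the lower bound on the Brownian clock $\widetilde A_{T^B_y}$ and showing it is large with high probability uniformly in the starting point $z$, since for $z$ itself large the process starts near the region we are trying to escape. I expect to handle this by comparing the clock to its value restricted to excursions above an intermediate level and invoking the strong Markov property at the first time $B$ drops below a fixed reference level, so that the uniformity in $z$ is reduced to a single worst-case starting configuration; the summability condition \eqref{eq:matsumoto-bis} then closes the estimate. Assembling these dyadic bounds yields $\E_z(Z_t)=\sum_k\E_z(Z_t\mathbbm 1_{Z_t\in[2^k,2^{k+1})})\le A_t<\infty$ uniformly in $z$, which is the claim.
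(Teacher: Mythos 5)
There is a genuine gap, and it sits exactly at the step you flag as ``the main obstacle'': your dyadic scheme bounds $\E_z(Z_t\11_{Z_t\in[2^k,2^{k+1})})\leq 2^{k+1}\P_z(Z_t\geq 2^k)$ and then replaces the event $\{Z_t\geq 2^k\}$ by the event that level $2^k$ is reached within clock-time $t$. That replacement cannot work. For $z>2^k$ the inclusion is simply false (paths can cross below $2^k$ and climb back up before time $t$). Worse, for $z$ below $2^k$ the resulting bound is provably too lossy to be summed: by optional stopping $\P_z(T^Z_{2^k}<\infty)=z\,2^{-k}$ exactly, and conditionally on $\{T^Z_{2^k}<T^Z_0\}$ (a Doob $h$-transform by $h(x)=x$) the expected hitting time satisfies $\E_1\bigl[T^Z_{2^k}\mid T^Z_{2^k}<T^Z_0\bigr]\leq 2\int_0^\infty(1\wedge x)\,x\,\widetilde m(dx)$, a bound uniform in $k$. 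For instance $\widetilde m(dx)=(x^{-1}\wedge x^{-3})\,dx$ satisfies~\eqref{eq:matsumoto-bis} and gives the bound $3$, so for fixed $t\geq 6$ one has $\P_1(T^Z_{2^k}\leq t)\geq \tfrac12\,2^{-k}$ for every $k$, whence $\sum_k 2^{k+1}\P_1(T^Z_{2^k}\leq t)=\infty$. In other words, the process genuinely does climb to every dyadic level within time $t$ with probability of order $2^{-k}$; the theorem is true only because, having climbed, it falls back down well before the fixed time $t$. Capturing that requires control of the law of $Z_t$ \emph{at the fixed time} $t$; hitting-time, maximal, and first-moment clock estimates combined with Borel--Cantelli over dyadic levels cannot see it, and pushing them through would effectively require something like $\int x^2\,\widetilde m(dx)<\infty$, which is strictly stronger than~\eqref{eq:matsumoto-bis} (it fails, e.g., for $\widetilde m(dx)=x^{-3}dx$ near infinity, i.e.\ $dZ_t=Z_t^{3/2}dB_t$, a case the theorem covers).

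The missing ingredient is precisely the analytic input the paper uses. The paper first reduces to the process stopped at level $1$ via the supermartingale and strong Markov properties, $\E_z(Z_t)\leq\E_z(Z_{t\wedge T^Z_1})+1$ (your idea of invoking the strong Markov property at a fixed reference level is the same in spirit), and then applies Matsumoto's theorem~\cite{matsumoto-89}: under~\eqref{eq:matsumoto-1} and~\eqref{eq:matsumoto-bis}, the stopped diffusion on $[1,\infty)$ admits a transition density $p(x,y,t)$ with respect to its speed measure with $\sup_{1\leq x,y<\infty}p(x,y,t)\leq A'_t$, so that $\E_z(Z^{T_1}_t)=\int_1^\infty y\,p(z,y,t)\,\nu(dy)\leq A'_t\int_1^\infty y\,\widetilde m(dy)<\infty$ uniformly in $z$. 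Note that the supremum appearing in~\eqref{eq:matsumoto-bis} is exactly the hypothesis of that density estimate (a control on oscillations of $\widetilde m$ near infinity); it is not a tail-summability condition adapted to a dyadic Markov-inequality argument. Your proposal contains no ingredient playing the role of this uniform fixed-time density bound, and without one the plan cannot be completed.
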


Note that Fubini's theorem implies that
\begin{align*}
  \int_1^\infty (z-1)\,\widetilde{m}(dz) & = \int_1^\infty\int_y^{\infty}\widetilde{m}(dz)\,dy \leq\int_1^\infty\frac{1}{x}\left(\sup_{y\geq x}y\int_y^\infty\widetilde{m}(dz)\right)dx.
\end{align*}
Hence~\eqref{eq:matsumoto-bis} and the fact that $\widetilde{m}$ is locally finite implies that
\begin{equation}
  \label{eq:matsumoto-1}
  \int_1^\infty y\,\widetilde{m}(dy)<\infty.
\end{equation}

\begin{rem}
  \label{rem:matsumoto}
  Although we do not know if all strict local martingale diffusions are strict in the strong sense, the last result shows that it is
  true for most strict local martingale diffusions. Indeed, when $\widetilde{m}$ has a density with respect to Lebesgue's measure
  $\Lambda$ on $(0,\infty)$, we recall that $(Z_t,t\geq 0)$ is (weak) solution to the SDE
  $$
  dZ_t=\sigma(Z_t)dB_t,
  $$
  where $(B_t,t\geq 0)$ is a standard Brownian motion and $\widetilde{m}(dx)=\sigma^{-2}(x)\Lambda(dx)$. In this case, the
  condition~\eqref{eq:matsumoto-bis} covers almost all the practical situations where~\eqref{eq:matsumoto-1} is satisfied. For
  example, it is true if
  \begin{align*}
    \frac{d\widetilde{m}}{d\Lambda}(x)\leq \frac{C}{x^2\log x\cdots \log_{k-1} x\left(\log_k x\right)^{1+\epsilon}},
  \end{align*}
  or equivalently
  \begin{align*}
    \sigma(x)\geq C'x\sqrt{\log x\cdots \log_{k-1} x\left(\log_k x\right)^{1+\epsilon}},
  \end{align*}
  for all $x$ large enough and for some $\epsilon>0$ and $k\geq 1$, where $\log_k(x)=\log_{k-1}( \log x)$
  (see~\cite[Ex.\,2]{matsumoto-89}). In other words,~\eqref{eq:matsumoto-bis} could fail only when $\widetilde{m}$ has strong
  oscillations close to $+\infty$ and $y\,\widetilde{m}(dy)$ is nearly non-integrable at infinity.
\end{rem}

\begin{proof}
  Since $(Z_t,t\geq 0)$ is a positive local martingale, hence a supermartingale, and a strong Markov process, we have for all $z\geq 1$ and $t>0$,
  \begin{align*}
    \EE_z(Z_t) & =\EE_z(Z_t\mathbbm{1}_{t<T^Z_1})+\EE_z(Z_t\mathbbm{1}_{T^Z_1<t}) \\  
    & \leq\EE_z(Z_{t\wedge T^Z_1}\mathbbm{1}_{t<T^Z_1})+\EE_z[\mathbbm{1}_{T^Z_1<t}\EE_1(Z_{t-T^Z_1})] \\ 
    & \leq\EE_z(Z_{t\wedge T^Z_1})+1,
  \end{align*}
  where $T^Z_1$ is the first hitting time of 1 by $Z$. Hence, we only need to prove that $\sup_{z\geq 1}\EE_z(Z^{T_1}_t)<\infty$,
  where $Z^{T_1}$ is the diffusion process $Z$ stopped at time $T^Z_1$.

  Since $Z^{T_1}$ is a diffusion process on $[1,\infty)$, stopped at 1, on natural scale and with speed measure
  $\nu(\cdot)=\widetilde{m}(\cdot\cap(1,\infty))$ satisfying
  $$
  \nu([1,2])<\infty,
  $$
  we can apply the result of~\cite[Thm.\,4.1,\,Ex.\,2]{matsumoto-89}. This result ensures that, under
  conditions~\eqref{eq:matsumoto-1} and~\eqref{eq:matsumoto-bis}, the probability density function of $Z^{T_1}_t$ with respect to
  $\nu$, denoted by $p(x,y,t)$, is well defined for all $t>0$ and there exists a constant $A'_t>0$ such that
  \begin{align*}
    \sup_{1\leq x,y <+\infty} p(x,y,t)\leq A'_t,\quad \forall t>0.
  \end{align*}
  As a consequence, for all $t>0$,
  \begin{align*}
    \E_z(Z^{T_1}_t) & =\int_1^\infty y\, p(z,y,t)\,d\nu(y)\leq A'_t \int_1^\infty y\,d\widetilde{m}(y). \qedhere
  \end{align*}
\end{proof}

\subsection{Links between strict local martingales and absorbed diffusions}
\label{sec:links-QSD-strict-mg}

The next result gives a change of measure linking the behavior near 0 and near $+\infty$ of diffusion processes on $[0,\infty)$ on
natural scale. Note that it does not require that $X$ comes down from infinity nor that it hits 0 in finite time.

\begin{thm}
  \label{thm:h-transfo}
  Let $X$ be a diffusion process on $[0,+\infty)$ on natural scale stopped at 0 with speed measure $m$. We denote by $\P_x$ its law
  when $X_0=x$, defined on the canonical space of continuous functions from $\R_+$ to itself, equipped with its canonical filtration
  $(\mathcal{F}_t)_{t\geq 0}$. For all $x>0$, we define the following $h$-transform of $\P_x$:
  $$
  \restriction{d\widetilde{\mathbb{Q}}_x}{\mathcal{F}_\tau}=\frac{X_\tau}{x}  \restriction{d\mathbb{P}_x}{\mathcal{F}_\tau},
  $$
  for all $(\mathcal{F}_t)$-stopping time $\tau$ such that $(X_{t\wedge\tau},t\geq 0)$ is a uniformly integrable martingale. Then,
  under $(\widetilde{\mathbb{Q}}_x)_{x>0}$, the process $Z_t:=1/X_t$ is a diffusion process stopped at 0 on natural scale with speed
  measure
  \begin{equation}
    \label{eq:def-tilde-m}
    \widetilde{m}(dz)=\frac{1}{z^2}(f*m)(dz),
  \end{equation}
  where $f*m$ is the pushforward measure of $m$ by the application $f(x)=1/x$.
\end{thm}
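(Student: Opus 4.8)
The plan is to realize $\widetilde{\mathbb{Q}}_x$ as Doob's $h$-transform of $\P_x$ by the harmonic function $h(x)=x$, and then to combine the standard transformation rules for the scale function and speed measure under an $h$-transform with the deterministic change of variable $z=1/x$. Since $X$ is on natural scale, the identity $h(x)=x$ is $L$-harmonic: $(X_t)_{t\geq 0}$ is a nonnegative local martingale under $\P_x$, so $X_\tau/x$ is a mean-one density on $\mathcal{F}_\tau$ for every stopping time $\tau$ for which $(X_{t\wedge\tau})$ is a uniformly integrable martingale. First I would check that these densities are consistent, so that $(\widetilde{\mathbb{Q}}_x)_{x>0}$ defines a regular strong Markov diffusion; the vanishing of the weight at $X=0$ forces the transformed process never to reach $0$, which is exactly what is needed for $Z=1/X$ to be well defined.

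Next I would identify the scale function and speed measure of $X$ under $\widetilde{\mathbb{Q}}_x$. The generator of the $h$-transform is $L^hf=\frac{1}{h}L(hf)$, and since $h$ is harmonic this is again of the form $D_{m^h}D_{s^h}$ with transformed scale $ds^h=ds/h^2$ and transformed speed $m^h=h^2 m$. With $h(x)=x$ and $s(x)=x$ this gives scale function $\tilde s(x)=-1/x$ (normalized to be increasing, with $\tilde s(0^+)=-\infty$ and $\tilde s(\infty)=0$) and speed measure $\hat m(dx)=x^2\,m(dx)$. In particular $0$ is inaccessible and $\infty$ has finite scale for $X$ under $\widetilde{\mathbb{Q}}_x$, consistent with $Z$ reaching its absorbing boundary $0$ precisely when $X$ escapes to $+\infty$.

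It remains to transport these data through $z=1/x$. Here I would use the observation $Z=1/X=-\tilde s(X)$: since $\tilde s$ is the scale function of $X$ under $\widetilde{\mathbb{Q}}_x$, the process $\tilde s(X)$ is on natural scale (a local martingale), and the reflection $y\mapsto -y$ preserves this, so $Z$ is a diffusion on natural scale with $s_Z(z)=z$. For the speed measure, the scale change $y=\tilde s(x)$ pushes $\hat m$ forward to the speed measure of $\tilde s(X)$, and the reflection pushes it forward once more, so the speed measure of $Z$ is the pushforward of $\hat m$ by $-\tilde s(x)=1/x=f(x)$. Computing $f_*(x^2 m)$ against a test function $g$ gives $\int g(1/x)\,x^2\,m(dx)=\int g(z)\,z^{-2}\,(f*m)(dz)$, which is exactly $\widetilde m(dz)=z^{-2}(f*m)(dz)$.

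I expect the main obstacle to be the rigorous, measure-theoretic bookkeeping in the last step rather than any deep difficulty: one must justify the scale and speed transformation rules at the level of measures (not merely for smooth $m$), keep careful track of the sign introduced by the decreasing maps $\tilde s$ and $f$, and verify that the boundary classification is correctly transported. In particular one should confirm that $Z$ is genuinely \emph{stopped} at $0$ (corresponding to $X$ reaching $+\infty$) and that $0$, being at infinite scale for the $\widetilde{\mathbb{Q}}_x$-diffusion, is never reached, which is what makes the $h$-transform and the inversion globally consistent.
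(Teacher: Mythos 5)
Your proof is correct, but it takes a genuinely different route from the paper's. Like you, the paper outsources the structural facts---that the $\widetilde{\QQ}_x$ are probability measures and that $Z=1/X$ is a diffusion on natural scale stopped at $0$---to a citation (Petrowski and Ruf). But for the identification of the speed measure, where you compose two classical transformation rules, the paper instead performs a direct verification of Green's formula: it shows that for all $0<a<z<b$ and all bounded measurable $g$, $\E^{\widetilde{\QQ}_{1/z}}\bigl[\int_0^{T^Z_{a,b}}g(Z_t)\,dt\bigr] = 2\int_a^b \frac{(z\wedge y-a)(b-z\vee y)}{b-a}\,g(y)\,\widetilde{m}(dy)$. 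The right-hand side is turned, via the substitution $y\mapsto 1/y$ and Green's formula for $X$ under $\P_{1/z}$, into $z\,\E_{1/z}\bigl[\int_0^{T_{1/b,1/a}}X_t\,g(1/X_t)\,dt\bigr]$; the left-hand side equals $z\,\E_{1/z}\bigl[X_{T_{1/b,1/a}}\int_0^{T_{1/b,1/a}}g(1/X_t)\,dt\bigr]$ by definition of $\widetilde{\QQ}_{1/z}$; and the two are matched by applying It\^o's formula to $X_t\int_0^t g(1/X_s)\,ds$ together with optional stopping for the bounded martingale $X_{\cdot\wedge T_{1/b,1/a}}$. Your route---the Doob $h$-transform rule $ds^h=ds/h^2$, $dm^h=h^2\,dm$ for harmonic $h$, giving $X$ scale $-1/x$ and speed $x^2m(dx)$ under $\widetilde{\QQ}_x$, followed by the pushforward rule for the scale change $z=1/x$ (the same rule the paper itself quotes from Revuz--Yor, Thm.\,VII.3.6, for general diffusions)---is more modular and explains conceptually where the factor $1/z^2$ comes from, and your final computation $\int g(1/x)\,x^2\,m(dx)=\int g(z)\,z^{-2}\,(f*m)(dz)$ is exactly right. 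Its cost is the one you honestly flag: the $h$-transform scale/speed rule must be justified for general, possibly non-absolutely-continuous speed measures. That rule is indeed in the literature (e.g., Borodin--Salminen's treatment of $h$-transforms of linear diffusions), so your argument is complete modulo that citation; but proving it from scratch would essentially reproduce the Green-function/It\^o computation that the paper carries out directly, so the paper's proof can be read as the inlined, self-contained version of the lemma you invoke.
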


In particular, if the measure $m(dx)$ is absolutely continuous w.r.t.\ Lebes\-gue's measure $\Lambda$ on $(0,+\infty)$, then
$\widetilde{m}(dz)$ is absolutely continuous w.r.t.\ $\Lambda$ and
\begin{equation}
  \label{eq:tilde-m-density}
  \frac{d\widetilde{m}}{d\Lambda}(z)=\frac{1}{z^4}\frac{dm}{d\Lambda}\left(\frac{1}{z}\right),\quad\forall z>0.  
\end{equation}

\begin{proof}
  The fact that $\widetilde{\QQ}_x$, $x>0$, are probability measures and that the process $Z=1/X$ is a diffusion on $[0,\infty)$
  stopped at 0 on natural scale under $(\widetilde{\QQ}_x)_{x>0}$ are proved in~\cite{petrowski-ruf-2012} (see Lemma~2.5 for the last
  point). Hence, we only have to check that $\widetilde{m}$ defined in~\eqref{eq:def-tilde-m} is its speed measure. By Green's
  formula~\cite[Lemma 23.10]{kallenberg-02}, this means that, for all $0<a<z<b<+\infty$ and for all bounded measurable function
  $g:(0,\infty)\rightarrow\R$,
  \begin{equation}
    \label{eq:goal}
    \E^{\widetilde{\QQ}_{1/z}}\left[\int_0^{T^Z_{a,b}}g(Z_t)\,dt\right]=2\int_a^b\frac{(z\wedge y-a)(b-z\vee y)}{b-a}g(y)\,\widetilde{m}(dy),    
  \end{equation}
  where $T^Z_{a,b}:=\inf\{t\geq 0: Z_t\in\{a,b\}\}$.
  
  On the one hand, the definition of $\widetilde{m}$ and the change of variable formula entails that the right hand term of~\eqref{eq:goal} is equal to
  \begin{multline*}
    2\int_a^b\frac{(z\wedge y-a)(b-z\vee y)}{b-a}\frac{g(y)}{y^2}\,(f*m)(dy) \\
    \begin{aligned}
      & =2\int_{1/b}^{1/a}\frac{(z\wedge \frac{1}{x}-a)(b-z\vee\frac{1}{x})}{b-a}x^2g(1/x)\,m(dx) \\ 
      & =2z\int_{1/b}^{1/a}\frac{(\frac{1}{a}-\frac{1}{z}\vee x)(x\wedge\frac{1}{z}-\frac{1}{b})}{\frac{1}{a}-\frac{1}{b}}xg(1/x)\,m(dx) \\ 
      & =z\E_{1/z}\left[\int_0^{T_{1/b,1/a}}X_tg(1/X_t)\,dt\right],
    \end{aligned}
  \end{multline*}
  where the last equality comes from the fact that $X$ is a diffusion on natural scale with speed measure $m$ under $(\P_x)_{x\geq
    0}$. 
    
    On the other hand, since $(X_{t\wedge T_{1/b,1/a}},t\geq 0)$ is a bounded martingale under $\PP_{1/z}$, we obtain by definition of $(\widetilde{\QQ}_x)_{x>0}$
  \begin{align*}
    \E^{\widetilde{\QQ}_{1/z}}\left[\int_0^{T^Z_{a,b}}g(Z_t)\,dt\right] & =\E^{\widetilde{\QQ}_{1/z}}\left[\int_0^{T_{1/b,1/a}}g(1/X_t)\,dt\right] \\
    & =z\E_{1/z}\left[X_{T_{1/b,1/a}}\int_0^{T_{1/b,1/a}}g(1/X_t)\,dt\right].
  \end{align*}
  Now, It\^o's formula implies that, a.s. for all $t\geq 0$,
  $$
  X_t\int_0^t g(1/X_s)\,ds=\int_0^t\left(\int_0^s g(1/X_u)\,du\right)\,dX_s+\int_0^t X_sg(1/X_s)\,ds.
  $$
  Since $(X_{t\wedge T_{1/b,1/a}},t\geq 0)$ is a bounded martingale under $\P_{1/z}$, for all $n\geq 1$,
  $$
  \E_{1/z}\left[X_{n\wedge T_{1/b,1/a}}\int_0^{n\wedge T_{1/b,1/a}}g(1/X_s)\,ds\right]
  =\E_{1/z}\left[\int_0^{n\wedge T_{1/b,1/a}}X_sg(1/X_s)\,ds\right].
  $$
  Since $\E_{1/z}[T_{1/b,1/a}]<\infty$ (this is a general property of diffusions), Lebesgue's theorem implies~\eqref{eq:goal}, which
  ends the proof of Theorem~\ref{thm:h-transfo}.
\end{proof}

The nature (regular/exit or entrance) of the boundary points 0 and $\infty$ for $X$ and $Z$ are related.

\begin{cor}
  \label{cor:h-transfo-1}
  With the previous notation, $X$ hits 0 in a.s.\ finite time iff $Z$ comes down form infinity, and $X$ comes down from infinity iff
  $Z$ hits 0 in a.s.\ finite time. In particular,
  \begin{align*}
    (Z_t)_{t\geq 0}\text{ is a strict local martingale}\ \Longleftrightarrow\ (X_t)_{t\geq 0} \mbox{ hits 0 in a.s.\ finite time}.
  \end{align*}
\end{cor}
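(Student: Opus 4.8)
The plan is to reduce both equivalences to integrability statements on the speed measures near the two boundary points, and then to transport these statements between $m$ and $\widetilde m$ through the formula~\eqref{eq:def-tilde-m}. Concretely, Section~\ref{sec:construction} records that for a diffusion on natural scale with speed measure $\mu$, local integrability of $y\,\mu(dy)$ near $0$ is equivalent to a.s.\ finite absorption at $0$, while local integrability of $y\,\mu(dy)$ near $+\infty$ is equivalent to coming down from infinity. Since Theorem~\ref{thm:h-transfo} guarantees that $Z$ is itself a diffusion on natural scale with speed measure $\widetilde m$, these two equivalences apply verbatim to $Z$ and $\widetilde m$ as well as to $X$ and $m$. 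The whole statement therefore follows once I relate the behaviour of $y\,\widetilde m(dy)$ near $0$ and $+\infty$ to that of $y\,m(dy)$ near $+\infty$ and $0$ respectively.

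For this I would use the change of variable $z=1/x$ encoded in~\eqref{eq:def-tilde-m}. Writing $f(x)=1/x$, the definition of the pushforward gives, for every bounded measurable $g$,
\[
  \int_0^\infty g(z)\,\widetilde m(dz)=\int_0^\infty g(z)\,\frac{1}{z^2}\,(f*m)(dz)=\int_0^\infty x^2\,g(1/x)\,m(dx).
\]
Taking $g(z)=z\,\mathbbm{1}_{(1,\infty)}(z)$ and noting that $1/x>1$ iff $x<1$ yields
\[
  \int_1^\infty z\,\widetilde m(dz)=\int_0^1 x\,m(dx),
\]
while $g(z)=z\,\mathbbm{1}_{(0,1)}(z)$ gives, symmetrically,
\[
  \int_0^1 z\,\widetilde m(dz)=\int_1^\infty x\,m(dx).
\]
The same computation with $g$ supported on a compact subinterval of $(0,\infty)$, together with the local finiteness of $m$, also shows that $\widetilde m$ is locally finite, so that the two equivalences of Section~\ref{sec:construction} do apply to $Z$.

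Combining these identities with the equivalences recalled above finishes the proof: $Z$ comes down from infinity iff $\int_1^\infty z\,\widetilde m(dz)<\infty$ iff $\int_0^1 x\,m(dx)<\infty$ iff $X$ hits $0$ in a.s.\ finite time, which is the first claim; and $Z$ hits $0$ in a.s.\ finite time iff $\int_0^1 z\,\widetilde m(dz)<\infty$ iff $\int_1^\infty x\,m(dx)<\infty$ iff $X$ comes down from infinity, which is the second. For the final assertion, \eqref{eq:str-loc-mg} states that $Z$ is a strict local martingale iff $\int_1^\infty z\,\widetilde m(dz)<\infty$, i.e.\ iff $Z$ comes down from infinity, so it coincides with the first equivalence just established. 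I do not expect a genuine obstacle here: the content is entirely a bookkeeping of integrability near $0$ and near $+\infty$ under the inversion $z=1/x$, and the only points requiring a little care are the correct handling of the $z^{-2}$ weight in~\eqref{eq:def-tilde-m} and the fact that neighbourhoods of $0$ and of $+\infty$ are swapped by $f$.
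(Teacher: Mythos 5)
Your proposal is correct and follows essentially the same route as the paper: reduce both equivalences to the integrability of $y\,m(dy)$ and $y\,\widetilde{m}(dy)$ near $0$ and $+\infty$, and transfer these via the change of variable $z=1/x$ in~\eqref{eq:def-tilde-m}. The paper's proof states exactly this reduction and leaves the change of variable as a one-line remark; you simply carry it out explicitly (including the identities $\int_1^\infty z\,\widetilde{m}(dz)=\int_0^1 x\,m(dx)$ and its symmetric counterpart), which is fine.
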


\begin{proof}
  We need to check that
  $$
  \int_0^1 y\,m(dy)<\infty\quad\Longleftrightarrow\quad \int_1^\infty y\,\widetilde{m}(dy)<\infty
  $$
  and
  $$
  \int_1^\infty y\,m(dy)<\infty\quad\Longleftrightarrow\quad \int_0^1 y\,\widetilde{m}(dy)<\infty.
  $$
  This follows from the definition~\eqref{eq:def-tilde-m} of $\widetilde{m}$ by a simple change of variable.
\end{proof}

The next corollary gives an important relationship between the absorption probability of $X$ under $\P_x$ and the expectation of $Z$
under $\widetilde{\QQ}_{x}$. We obtain in particular a condition on the diffusion $X$ equivalent to the fact that $Z$ is a strict
local martingale in the strong sense.

\begin{cor}
\label{cor:h-transfo-2}
With the previous notation, for all $t>0$ and $x>0$, we have
\begin{align*}
\frac{\P^X_x(t<\tau_\d)}{x}=\E^Z_{1/x}(Z_t),
\end{align*}
where $\P^X_x$ and $\P^Z_z$ are the respective distributions of the diffusion processes $X$ such that $X_0=x$ and $Z$ such that
$Z_0=z$. In particular, for any constant $A>0$ and $t>0$,
\begin{equation}
\label{eq:yh}
\sup_{z>0} \E^Z_z(Z_t)\leq A\ \Longleftrightarrow\ \P^X_x(t<\tau_\d)\leq Ax,\ \forall x\geq 0.
\end{equation}
\end{cor}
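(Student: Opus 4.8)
The plan is to read the identity straight off the $h$-transform of Theorem~\ref{thm:h-transfo}. Under $\widetilde{\QQ}_x$ the process $Z=1/X$ starts from $Z_0=1/x$ and is precisely the diffusion of the statement, so $\P^Z_{1/x}=\widetilde{\QQ}_x$ and hence $\E^Z_{1/x}(Z_t)=\E^{\widetilde{\QQ}_x}(1/X_t)$. It thus suffices to compute $\E^{\widetilde{\QQ}_x}(1/X_t)$ using the change of measure $\restriction{d\widetilde{\QQ}_x}{\mathcal{F}_\tau}=(X_\tau/x)\restriction{d\P_x}{\mathcal{F}_\tau}$; the point is that the Radon--Nikodym factor $X_\tau$ will cancel the $1/X_t$ and leave an indicator.

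The main obstacle is that this change of measure is valid only at stopping times $\tau$ for which $(X_{s\wedge\tau})_{s\ge0}$ is a uniformly integrable martingale, and a deterministic time $t$ fails this when $Z$ is a strict local martingale. I would circumvent it by a two-sided localization: for $0<\epsilon<x<M<\infty$ set $T_{\epsilon,M}=\inf\{s\ge0:X_s\notin(\epsilon,M)\}$ and apply the formula at $\tau=t\wedge T_{\epsilon,M}$, whose stopped process is bounded and hence uniformly integrable. Choosing the bounded, $\mathcal{F}_\tau$-measurable test function $G=\frac{1}{X_t}\mathbbm{1}_{t<T_{\epsilon,M}}$, so that $X_\tau=X_t$ cancels $1/X_t$ on $\{t<T_{\epsilon,M}\}$, gives
\begin{equation*}
  \E^{\widetilde{\QQ}_x}\left[\frac{1}{X_t}\,\mathbbm{1}_{t<T_{\epsilon,M}}\right]=\frac{1}{x}\,\P^X_x(t<T_{\epsilon,M}).
\end{equation*}

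It remains to let $M\to\infty$ and then $\epsilon\to0$, applying monotone convergence on each side. Under $\P^X_x$ the nonnegative local martingale $X$ does not explode, so $T_M\uparrow\infty$ and $T_\epsilon\uparrow\tau_\d$, and the right-hand side rises to $\frac1x\P^X_x(t<\tau_\d)$. Under $\widetilde{\QQ}_x$, on the contrary, $X$ is $h$-transformed by its scale function and is therefore transient to $+\infty$: it a.s.\ never reaches $0$ and eventually remains above any fixed level, so $T_\epsilon\to\infty$ while $T_M$ increases to the time $X$ reaches $+\infty$, i.e.\ to the absorption time of $Z=1/X$ at $0$; since $Z_t=0$ after absorption, the left-hand side rises to $\E^{\widetilde{\QQ}_x}(Z_t)=\E^Z_{1/x}(Z_t)$, which is the asserted equality. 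The equivalence~\eqref{eq:yh} then follows at once by the substitution $z=1/x$, together with the trivial boundary case $x=0$ (where $0$ is absorbing). I expect the genuinely delicate step to be the correct identification of the limiting events under the two measures---in particular that $T_\epsilon\to\infty$ holds $\widetilde{\QQ}_x$-almost surely---rather than the elementary algebra.
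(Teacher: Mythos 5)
Your proof is correct and follows essentially the same route as the paper: identify $\E^Z_{1/x}(Z_t)$ with $\E^{\widetilde{\QQ}_x}(1/X_t)$ via Theorem~\ref{thm:h-transfo} and then cancel the Radon--Nikodym factor $X_\tau$ against $1/X_t$ to get $\frac{1}{x}\P^X_x(X_t>0)$. The paper states this cancellation in one line, implicitly relying on the general $h$-transform theory it cites; your two-sided localization at $t\wedge T_{\epsilon,M}$ and the monotone-convergence passage to the limit (using that $X$ does not explode under $\P_x$ and that $Z$ is a nonnegative supermartingale, hence non-explosive, under $\widetilde{\QQ}_x$) is exactly the justification that makes that line rigorous.
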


\begin{proof}[Proof of Corollary~\ref{cor:h-transfo-2}]
We have, for all $x>0$ and all $t>0$,
\begin{equation*}
\E^Z_{1/x}(Z_t)=\E^{\widetilde{\QQ}_x}\left(\frac{1}{X_t}\right)=\frac{1}{x}\P^X_x(X_t>0).\qedhere
\end{equation*}
\end{proof}

\section{Quasi-stationary distribution for one dimensional diffusions}
\label{sec:1d_diffusions}
\label{sec:QSD-diff}

We consider in this section a diffusion process $X$ on $[0,\infty)$ on natural scale hitting 0 in a.s. finite time $\tau_\d$. Recall
that this is equivalent to assuming
$$
\int_0^1 y\,m(dy)<\infty
$$
where $m$ is the speed measure of $X$. We also set $X_t=0$ for all $t>\tau_\d$.

\subsection{Exponential convergence to quasi-stationary distribution}
\label{sec:QSD-without-killing}

The aim of this section is to establish that the following Condition~(B) is equivalent to the exponential convergence to a unique
quasi-stationary distribution of conditional distributions of $X$.

\begin{description}
\item[\textmd{(B)}] The process $X$ comes down from infinity and there exist two constants $t_1,A>0$ such that
  \begin{equation}
    \label{eq:hyp-diffusion}
    \P_x(\tau_\d> t_1)\leq Ax,\quad\forall x>0.
  \end{equation}
\end{description}

Let us recall that $X$ comes down from infinity iff $\int_1^\infty y\,m(dy)<\infty$.

\begin{rem}
  \label{rem:jolie-rk}
  Note that it is well known that, for diffusion processes on natural scale on $[0,\infty)$, $\P_x(\tau_\d<\infty)>0$ for some $x>0$
  implies that $\P_x(\tau_\d<\infty)=1$ for all $x>0$ (it can be also deduced from~\eqref{eq:str-loc-mg} and the results of
  Section~\ref{sec:links-QSD-strict-mg}). Hence~\eqref{eq:hyp-diffusion} implies that $\int_0^1 y\,m(dy)<\infty$.
\end{rem}

\begin{rem}
  \label{rem:cond-(B)}
  The converse inequality in~\eqref{eq:hyp-diffusion} is true for all diffusion process $X$ on natural scale on $(0,+\infty)$: for
  all $t>0$, there exists a constant $a>0$ such that
  $$
  \PP_x(t<\tau_\d)\geq a x,\quad\forall x\in(0,1).
  $$
  Indeed, by the strong Markov property at time $T_1$,
  \begin{align*}
    \PP_x(t<\tau_\d)\geq\EE_x[\mathbbm{1}_{T_1<\tau_\d}\PP_1(t<\tau_\d)]=\PP_x(T_1<\tau_\d)\PP_1(t<\tau_\d)=x \PP_1(t<\tau_\d).
  \end{align*}
\end{rem}

The following result is proved in Section~\ref{sec:proof-QSD-diff}.

\begin{thm}
  \label{thm:QSD_full}
  Assume that $X$ is a one-dimensional diffusion on natural scale on $[0,\infty)$ with speed measure $m(dx)$ such that
  $\tau_\d<\infty$ a.s. Then we have equivalence between
  \begin{description}
  \item[\textmd{(i)}] Assumption \textup{(B)}.
  \item[\textmd{(ii)}] There exist a probability measure $\alpha$ on $(0,\infty)$ and two constants $C,\gamma>0$ such that, for all
    initial distribution $\mu$ on $(0,\infty)$,
    \begin{align}
      \label{eq:expo-cv}
      \left\|\PP_\mu(X_t\in\cdot\mid t<\tau_\partial)-\alpha(\cdot)\right\|_{TV}\leq C e^{-\gamma t},\ \forall t\geq 0.
    \end{align}
  \end{description} 
  
  In this case, $\alpha$ is the unique quasi-stationary distribution for the process and there exists $\lambda_0>0$ such that
  $\P_\alpha(t<\tau_\d)=e^{-\lambda_0 t}$. Moreover, $\alpha$ is absolutely continuous with respect to $m$ and
  \begin{align}
    \label{eq:expression-of-alpha}
    \frac{d\alpha}{dm}(x)=2\lambda_0 \int_0^\infty (x\wedge y)\,\alpha(dy).
  \end{align}
  In addition,
  $$
  \int_0^\infty y\,\alpha(dy)<\infty.
  $$
\end{thm}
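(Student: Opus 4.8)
The plan is to prove the equivalence $(i)\Leftrightarrow(ii)$ by reducing Condition (B) to the abstract criterion of Champagnat--Villemonais, which characterizes uniform exponential convergence to a quasi-stationary distribution in terms of a Doeblin-type condition on the conditioned semigroup. Concretely, the abstract theory states that~\eqref{eq:expo-cv} holds if and only if there exist $t_0>0$, a probability measure $\nu$ on $(0,\infty)$, and constants $c_1,c_2>0$ such that (A1) $\P_x(X_{t_0}\in\cdot\mid t_0<\tau_\d)\geq c_1\,\nu(\cdot)$ for all $x\in(0,\infty)$, and (A2) $\P_\nu(t<\tau_\d)\geq c_2\,\sup_{x}\P_x(t<\tau_\d)$ for all $t\geq 0$. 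So the bulk of the proof of $(i)\Rightarrow(ii)$ is to verify (A1) and (A2) from Condition (B).

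For the implication $(i)\Rightarrow(ii)$, I would exploit the one-dimensional structure heavily. First I would use that $X$ comes down from infinity to get a uniform-in-starting-point control: there exist $t,y$ with $\inf_{x>y}\P_x(T_y<t)>0$, so from any starting point (including $+\infty$) the process reaches a compact set $[y_0,y_1]\subset(0,\infty)$ quickly with probability bounded below, uniformly in $x$. This is the key to establishing the lower bound (A1), since once the process is in a fixed compact interval at some time, the conditional law spreads a fixed mass over a reference measure $\nu$ supported in $(0,\infty)$; the monotone structure of hitting probabilities for diffusions on natural scale (the explicit formula $\P_x(T_a<T_b)=(b-x)/(b-a)$) lets me make these bounds uniform. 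The bound~\eqref{eq:hyp-diffusion} of Condition (B), $\P_x(\tau_\d>t_1)\leq Ax$, is what controls the \emph{survival} asymmetry near $0$: it forces the conditional distribution to not leak mass toward the absorbing boundary, which is exactly what is needed to pair with the coming-down-from-infinity estimate near $+\infty$. I would then verify (A2) using Remark~\ref{rem:cond-(B)} together with (B): the lower bound $\P_x(t<\tau_\d)\geq ax$ for small $x$ combined with the upper bound $\P_x(\tau_\d>t_1)\leq Ax$ and the Markov property should give the required comparison $\sup_x\P_x(t<\tau_\d)\leq c_2^{-1}\P_\nu(t<\tau_\d)$ after an initial time $t_1$.

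For the converse $(ii)\Rightarrow(i)$, the abstract theory already gives that~\eqref{eq:expo-cv} implies the existence of $\lambda_0$ with $\P_\alpha(t<\tau_\d)=e^{-\lambda_0 t}$ and the uniform convergence~\eqref{eq:convergence-to-eta} to the eigenfunction $\eta$. Since $\eta$ is bounded and $\P_x(t<\tau_\d)\sim e^{-\lambda_0 t}\eta(x)$ uniformly, I would deduce $\sup_x\P_x(t<\tau_\d)<\infty$ after normalization, and then use the linear lower bound near $0$ from Remark~\ref{rem:cond-(B)} together with the $h$-transform identity of Corollary~\ref{cor:h-transfo-2}, namely $\P_x^X(t<\tau_\d)/x=\E_{1/x}^Z(Z_t)$, to translate the boundedness of $\P_x(t<\tau_\d)/x$ into~\eqref{eq:hyp-diffusion}; boundedness of $\E_z^Z(Z_t)$ for small $z$ (equivalently $\P_x(\tau_\d>t)\leq Ax$ for large $x$) follows, while for small $x$ the bound is automatic since $\P_x(\tau_\d>t)\leq 1$. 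One must also check that (ii) forces $X$ to come down from infinity, which I would argue by contradiction: if $+\infty$ were not an entrance boundary, the conditioned process started from large $x$ could not be attracted uniformly to a fixed $\alpha$, violating the uniformity in $\mu$ of~\eqref{eq:expo-cv}.

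Finally, for the structural conclusions on $\alpha$: the identity $\P_\alpha(t<\tau_\d)=e^{-\lambda_0 t}$ means $\alpha$ is a left eigenmeasure of the sub-Markovian semigroup with eigenvalue $e^{-\lambda_0 t}$, hence (formally) $\alpha$ solves $L^*\alpha=-\lambda_0\alpha$. Using the explicit Green's function representation for diffusions on natural scale with speed measure $m$ (the same kernel $2(x\wedge y - a)(b - x\vee y)/(b-a)$ appearing in~\eqref{eq:goal}), I would invert the generator to obtain that $\alpha$ has density with respect to $m$ given by~\eqref{eq:expression-of-alpha}, that is $\frac{d\alpha}{dm}(x)=2\lambda_0\int_0^\infty (x\wedge y)\,\alpha(dy)$; the kernel $x\wedge y$ is exactly the Green's function on $[0,\infty)$ for the diffusion killed at $0$. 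The integrability $\int_0^\infty y\,\alpha(dy)<\infty$ should then follow from this formula combined with $\int_0^\infty y\,m(dy)<\infty$ (assumption~\eqref{eq:ymdy_integrable}, which holds since $X$ hits $0$ and comes down from infinity) by a Fubini argument. The main obstacle I anticipate is the careful verification of the uniform lower bound (A1) over \emph{all} starting points including those near the boundaries: near $+\infty$ it rests on coming down from infinity, but making the mass deposited on a fixed reference set $\nu$ genuinely uniform in $x$ near $0$ requires the linear survival estimate of (B) to prevent the conditional law from degenerating, and reconciling these two regimes into a single $\nu$ and single constant $c_1$ is the delicate point.
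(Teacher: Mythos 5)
Your overall frame is the same as the paper's: reduce (ii) to the abstract conditions (A1)--(A2) of Champagnat--Villemonais and verify them from (B). However, there are genuine gaps. The most serious is your argument for (ii)$\Rightarrow$(i). You claim that in proving $\P_x(\tau_\d>t_1)\leq Ax$ the bound ``is automatic for small $x$ since $\P_x(\tau_\d>t)\leq 1$'' and that the work lies at large $x$; this is exactly backwards. For $x$ large, $Ax\geq 1$ and the bound is trivial; the entire content of \eqref{eq:hyp-diffusion} is the linear decay as $x\to 0$, which via Corollary~\ref{cor:h-transfo-2} corresponds to $\sup_z\E_z(Z_t)<\infty$, i.e.\ to large $z=1/x$, not small $z$. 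Moreover your proposed mechanism cannot produce this decay: the uniform convergence \eqref{eq:convergence-to-eta} carries an additive error uniform in $x$, so even granting $\eta(x)\leq Cx$ (which the paper derives \emph{from} (B), so using it here is circular) you only get $\P_x(t<\tau_\d)\leq(Cx+\varepsilon)e^{-\lambda_0 t}$, which does not vanish linearly at $0$. The workable route (the paper's) is direct: (ii) forces the conditional law at a fixed time $t_1$ to charge some compact $[a,b]$ with mass $\underline{c}>0$ uniformly in $x$; then for $x<a$, optional stopping of the bounded martingale $X_{\cdot\wedge T_a}$ gives $\P_x(X_{t_1}\in[a,b])\leq \E_x(X_{t_1\wedge T_a})/a = x/a$, hence $\P_x(t_1<\tau_\d)\leq x/(a\underline{c})$; the same uniform mass on $[a,b]$ also yields the entrance-boundary property at $+\infty$ without your contradiction hand-waving.

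Within (i)$\Rightarrow$(ii) there are two further gaps. First, the heart of (A1) is precisely the step you defer as ``the delicate point'': showing $\P_x(X_{t_1}\geq\varepsilon\mid t_1<\tau_\d)\geq c$ uniformly in $x$. The paper does this with a quantitative argument you do not supply: from $x=\E_x(X_{t_1\wedge T_1})$ and $\P_x(T_1<\tau_\d\leq t_1)\leq x\P_1(\tau_\d\leq t_1)$, Condition (B) gives $\E_x(1-X_{t_1\wedge T_1}\mid t_1<\tau_\d)\leq 1-1/A'$, and Markov's inequality then bounds the conditional mass near $0$; a coupling of two independent copies (meeting before the upper one hits $0$, which has uniformly positive probability by coming down from infinity) then converts this into the Doeblin bound (A1). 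Second, your plan for (A2) uses the wrong ingredients: the linear bounds of Remark~\ref{rem:cond-(B)} and \eqref{eq:hyp-diffusion} concern $x$ near $0$, whereas (A2) must control $\sup_x\P_x(t<\tau_\d)$ including $x\to\infty$ \emph{for all} $t$. This requires a rate comparison: exponential moments $\sup_{x\geq b}\E_x(e^{\rho T_b})<\infty$ (from $\int y\,m(dy)<\infty$), a lower bound $\P_a(X_{kt_0}\geq a)\geq e^{-\rho k t_0}$ on the decay of survival from a fixed point (the paper's Lemma~\ref{le:lemme1}), and a choice of $b$ making the moment rate beat $\rho$, followed by a decomposition at $T_b$. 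Nothing in your sketch addresses this rate matching. Finally, a minor point: your Fubini argument for $\int_0^\infty y\,\alpha(dy)<\infty$ is circular --- inserting \eqref{eq:expression-of-alpha} only yields $\int y\,\alpha(dy)\leq C\int y\,\alpha(dy)$ --- whereas the paper gets finiteness from the boundedness of $\eta$ and monotone convergence in $\eta(x)\propto\int_0^\infty(x\wedge y)\,\alpha(dy)$.
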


In the next result, we provide additional information on the function $\eta$ defined in~\eqref{eq:convergence-to-eta}.

\begin{prop}
  \label{prop:2}
  Assume that $X$ is a one-dimensional diffusion on natural scale with speed measure $m(dx)$ such that $\tau_\d<\infty$ a.s. and such
  that (B) is satisfied. Then the limit $\eta$ in~\eqref{eq:convergence-to-eta} is given by the normalized right hand side
  of~\eqref{eq:expression-of-alpha}:
  \begin{align}
    \label{eq:formula_eta}
    \eta(x)=\frac{\displaystyle{\int_0^\infty (x\wedge y)\,\alpha(dy)}}{\displaystyle{\int_0^\infty\int_0^\infty(y\wedge z)\,\alpha(dy)\,\alpha(dz)}}.
  \end{align}
  Moreover, the function $\eta$ is non-decreasing, bounded, $\alpha(\eta)=1$ and $\eta(x)\leq C x$ for all $x\geq 0$ and some constant
  $C>0$.
\end{prop}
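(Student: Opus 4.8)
The plan is to identify the numerator $\varphi(x):=\int_0^\infty(x\wedge y)\,\alpha(dy)$ of~\eqref{eq:formula_eta} with a multiple of the eigenfunction $\eta$, and then to fix the multiplicative constant from $\alpha(\eta)=1$. I would first record the elementary properties of $\varphi$. Since $x\mapsto x\wedge y$ is non-decreasing with $0\wedge y=0$, the function $\varphi$ is non-decreasing and $\varphi(0)=0$; from $x\wedge y\leq x$ we get $\varphi(x)\leq x$, and from $x\wedge y\leq y$ together with $\int_0^\infty y\,\alpha(dy)<\infty$ (Theorem~\ref{thm:QSD_full}) we get $\varphi(x)\leq\int_0^\infty y\,\alpha(dy)<\infty$, so $\varphi$ is bounded. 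Because $\alpha$ charges $(0,\infty)$, we have $\varphi(x)>0$ for every $x>0$, hence $\alpha(\varphi)=\int_0^\infty\int_0^\infty(y\wedge z)\,\alpha(dy)\,\alpha(dz)\in(0,\infty)$ and the right-hand side of~\eqref{eq:formula_eta} is well defined.

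Next I would check that $\varphi$ solves $L\varphi=-\lambda_0\varphi$ with $\varphi(0)=0$. Its natural-scale right derivative is $\varphi'(x)=\alpha((x,\infty))$, so that $d\varphi'(x)=-\alpha(dx)$; substituting the explicit density~\eqref{eq:expression-of-alpha}, namely $\alpha(dx)=2\lambda_0\varphi(x)\,m(dx)$, this reads $d\varphi'(x)=-2\lambda_0\varphi(x)\,m(dx)$. Since the Green formula~\eqref{eq:goal} (with its factor $2$) identifies the generator of a natural-scale diffusion with speed measure $m$ as $\tfrac12\,\tfrac{d}{dm}\tfrac{d}{dx}$, this is exactly $L\varphi=-\lambda_0\varphi$, the factor $2$ in~\eqref{eq:expression-of-alpha} being precisely what matches the two normalizations.

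The key step is then a uniqueness argument. Both $\eta$ and $\varphi$ solve the generalized second-order equation $\tfrac12\,\tfrac{d}{dm}\tfrac{d}{dx}f=-\lambda_0 f$ with $f(0)=0$: this holds for $\varphi$ by the previous paragraph, and for $\eta$ because $L\eta=-\lambda_0\eta$ (recalled in the introduction) and $\eta(0)=\lim_{t\to\infty}e^{\lambda_0 t}\P_0(t<\tau_\d)=0$. In integral form this equation reads $f(x)=f'(0^+)\,x-2\lambda_0\int_0^x\!\int_0^u f(v)\,m(dv)\,du$, a Volterra equation whose solution vanishing at $0$ is determined by the single scalar $f'(0^+)$; hence the space of such solutions is one-dimensional and $\eta=c\,\varphi$ for some constant $c\geq 0$. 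I expect this step --- working with the generator in the sense of the speed measure and establishing the attendant uniqueness --- to be the main obstacle.

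Finally I would fix the constant $c$. Integrating the uniform (in $x$) convergence $e^{\lambda_0 t}\P_x(t<\tau_\d)\to\eta(x)$ from~\eqref{eq:convergence-to-eta} against the probability measure $\alpha$ and using $\P_\alpha(t<\tau_\d)=e^{-\lambda_0 t}$ gives $\alpha(\eta)=\lim_{t\to\infty}e^{\lambda_0 t}\P_\alpha(t<\tau_\d)=1$. Thus $c\,\alpha(\varphi)=1$, i.e.\ $c=1/\alpha(\varphi)$, which is precisely~\eqref{eq:formula_eta}. The remaining claims then follow immediately: $\eta$ inherits monotonicity and boundedness from $\varphi$, the identity $\alpha(\eta)=1$ has just been established, and $\eta(x)=\varphi(x)/\alpha(\varphi)\leq x/\alpha(\varphi)$, so $\eta(x)\leq C x$ with $C=1/\alpha(\varphi)$.
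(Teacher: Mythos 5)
Your strategy is genuinely different from the paper's, and in outline it is viable: you take \eqref{eq:expression-of-alpha} as input, check that $\varphi(x)=\int_0^\infty(x\wedge y)\,\alpha(dy)$ satisfies $d\varphi^+=-2\lambda_0\varphi\,dm$, and conclude by uniqueness for the eigenvalue problem with boundary condition $f(0)=0$; this would bypass the paper's reversibility lemma (Lemma~\ref{lem:reversibility}) entirely. (Citing \eqref{eq:expression-of-alpha} is legitimate: the paper proves it independently of Proposition~\ref{prop:2}, by integrating the occupation-time formula \eqref{eq:green-function_0_infty} against $\alpha$.) But as written your proof has gaps at both of its central steps. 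First, you assert that $\eta$ solves $\tfrac12\tfrac{d}{dm}\tfrac{d\eta}{dx}=-\lambda_0\eta$ ``because $L\eta=-\lambda_0\eta$''. The recalled fact concerns the abstract generator on the Banach space of bounded measurable functions with the sup norm; identifying its action on $\eta$ with the differential operator $\tfrac12\tfrac{d}{dm}\tfrac{d}{dx}$ is precisely what needs proof (the characterization quoted in the paper, from Freedman, gives only the inclusion of the ``differential'' domain into the abstract domain, not the converse). The paper circumvents this entirely: from the uniform convergence \eqref{eq:convergence-to-eta} one gets $P_t\eta=e^{-\lambda_0 t}\eta$, and integrating in $t$ and using \eqref{eq:green-function_0_infty} yields $\eta(x)=2\lambda_0\int_0^\infty(x\wedge y)\,\eta(y)\,m(dy)$, which is the integrated form of your equation; your argument needs this step or an equivalent substitute. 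Second, the one-dimensionality of the solution space, which you yourself flag as ``the main obstacle'', is left unproved, and it does not follow from off-the-shelf Volterra theory: $m$ may have infinite mass near $0$ (only $\int_0^1 y\,m(dy)<\infty$ is assumed), so one must restrict to solutions that are $O(x)$ at $0$ (as $\eta$ and $\varphi$ both are) and run a Gronwall argument weighted by $x$, using $\int_0^x y\,m(dy)\to0$.

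There is also a circularity you should remove. You invoke $\int_0^\infty y\,\alpha(dy)<\infty$ from Theorem~\ref{thm:QSD_full} to show $\varphi$ is bounded, but in the paper that very claim is deduced \emph{from} Proposition~\ref{prop:2} (boundedness of $\eta$ combined with \eqref{eq:formula_eta}), so within the paper's architecture your argument assumes part of what it is meant to prove. The fix is easy: boundedness of $\varphi$ plays no role in the uniqueness step, and once $\eta=c\varphi$ with $c>0$ is established, the boundedness of $\eta$ (which comes from the cited abstract result of \cite{ChampagnatVillemonais2016}, not from $\varphi$) gives boundedness of $\varphi$ and hence $\int_0^\infty y\,\alpha(dy)<\infty$ as a by-product, exactly as in the paper.
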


\begin{rem}
  \label{rem:densite-alpha}
  We deduce from the last results that the density of $\alpha$ with respect to $m$ is proportional to $\eta$. Hence it is positive,
  bounded, (strictly) increasing on $(0,+\infty)$, and differentiable at each point of $(0,+\infty)$.
\end{rem}

\subsection{Conditions ensuring~(B) using elementary probabilistic tools}
\label{sec:condition-(B)}

The next result gives a condition implying (B). Its proof is particularly simple. We give in the next section a slightly more general
criterion which makes use of the results on strict local martingales of Section~\ref{sec:pties-diff}.

\begin{thm}
  \label{thm:SDE1}
  With the previous notation, assume that $X$ comes down from infinity, hits 0 a.s.\ in finite time and for all $x\in(0,1)$,
  \begin{equation}
    \label{eq:cond-moments}
    I(x):=\int_0^x y\,m(dy)\leq Cx^\rho    
  \end{equation}
  for some constants $C>0$ and $\rho>0$.
  Then, for all $t>0$, there exists $A_t<\infty$ such that
  \begin{align*}
    \P_x(t<\tau_\d)\leq A_t\, x,\ \forall x>0.
  \end{align*}
\end{thm}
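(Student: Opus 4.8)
The plan is to reduce the statement to a strong-sense strict martingale property of the inverse process $Z:=1/X$ and then to quote Theorem~\ref{thm:h-transfo-3-v2}. Since $X$ comes down from infinity and hits $0$ in a.s.\ finite time, Corollary~\ref{cor:h-transfo-1} guarantees that $Z$ is a bona fide strict local martingale, and Theorem~\ref{thm:h-transfo} identifies its speed measure as $\widetilde m(dz)=z^{-2}(f*m)(dz)$ with $f(x)=1/x$. I would then invoke Corollary~\ref{cor:h-transfo-2}, which says that $\P^X_x(t<\tau_\d)=x\,\E^Z_{1/x}(Z_t)$; consequently the bound I must prove, $\P_x(t<\tau_\d)\le A_t\,x$, is \emph{exactly} the assertion $\sup_{z>0}\E^Z_z(Z_t)=A_t<\infty$, i.e.\ that $Z$ is a strict martingale in the strong sense. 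Everything thus boils down to verifying hypothesis~\eqref{eq:matsumoto-bis} for $\widetilde m$.

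The main work is the translation of the moment bound~\eqref{eq:cond-moments} on $m$ near $0$ into the tail control of $\widetilde m$ near $+\infty$ demanded by~\eqref{eq:matsumoto-bis}. First I would compute the tail of $\widetilde m$: a change of variable $z=1/\xi$ in the pushforward gives, for every $y>0$,
\begin{equation*}
\int_y^\infty\widetilde m(dz)=\int_0^{1/y}\xi^2\,m(d\xi).
\end{equation*}
The key estimate is then the elementary inequality, valid for $0<u\le 1$,
\begin{equation*}
\int_0^u \xi^2\,m(d\xi)\le u\int_0^u \xi\,m(d\xi)=u\,I(u)\le C\,u^{1+\rho},
\end{equation*}
where the last step uses~\eqref{eq:cond-moments}. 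Taking $u=1/y$ with $y\ge 1$ yields $y\int_y^\infty\widetilde m(dz)\le C\,y^{-\rho}$. Since this bound is decreasing in $y$, one gets $\sup_{y\ge x}\big(y\int_y^\infty\widetilde m(dz)\big)\le C\,x^{-\rho}$ for all $x\ge 1$, and hence
\begin{equation*}
\int_1^\infty\frac1x\,\sup_{y\ge x}\Big(y\int_y^\infty\widetilde m(dz)\Big)\,dx\le C\int_1^\infty x^{-1-\rho}\,dx=\frac{C}{\rho}<\infty,
\end{equation*}
which is precisely~\eqref{eq:matsumoto-bis}. The remaining assumptions of Theorem~\ref{thm:h-transfo-3-v2}, namely that $\widetilde m$ is locally finite and charges every open subset of $(0,\infty)$, transfer immediately from $m$ through the formula defining $\widetilde m$.

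With~\eqref{eq:matsumoto-bis} in hand, Theorem~\ref{thm:h-transfo-3-v2} yields $\sup_{z>0}\E^Z_z(Z_t)<\infty$ for every $t>0$, and Corollary~\ref{cor:h-transfo-2} converts this back into the claimed bound. The hard part will be the passage between the two moment conditions: the whole argument hinges on the gain of one power of $\xi$ in $\int_0^u\xi^2\,m(d\xi)\le u\,I(u)$, which turns the merely polynomial control~\eqref{eq:cond-moments} into a decay $y^{-\rho}$ fast enough to overcome the logarithmically divergent weight $1/x$ in~\eqref{eq:matsumoto-bis}; note that any exponent $\rho>0$, however small, suffices. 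Apart from this observation the proof is a direct assembly of the results of Section~\ref{sec:pties-diff}.
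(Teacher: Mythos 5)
Your proof is correct, but it is not the paper's proof of Theorem~\ref{thm:SDE1}; it is essentially the argument by which the paper obtains the \emph{other} criterion, Theorem~\ref{thm:h-transfo-3}. The paper proves Theorem~\ref{thm:SDE1} by purely elementary means: Lemma~\ref{prop:moment} expresses the moments $M_k(x)=\E_x(\tau_\d^k)$ through the Green function, $M_k(x)=2k\int_0^\infty(x\wedge y)M_{k-1}(y)\,m(dy)$; an induction using~\eqref{eq:cond-moments} and an integration by parts gives $M_k(x)\le C_k x^{(k\rho)\wedge 1}$ on $(0,1)$; choosing $k$ with $k\rho>1$ and applying Markov's inequality yields $\P_x(t<\tau_\d)\le M_k(x)/t^k\le A_t x$. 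You instead route through the $h$-transform duality of Section~\ref{sec:pties-diff}: the identity $\P_x(t<\tau_\d)=x\,\E^Z_{1/x}(Z_t)$ of Corollary~\ref{cor:h-transfo-2}, the tail computation $\int_y^\infty\widetilde m(dz)=\int_0^{1/y}\xi^2\,m(d\xi)\le (1/y)\,I(1/y)\le C y^{-1-\rho}$, and Theorem~\ref{thm:h-transfo-3-v2}; your verification of~\eqref{eq:matsumoto-bis} for $\widetilde m$ is exactly the change-of-variables form of condition~\eqref{eq:matsumoto}, so in effect you have checked that~\eqref{eq:cond-moments} implies the hypothesis of the more general Theorem~\ref{thm:h-transfo-3} and invoked that result. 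All the steps check out, and there is no circularity since Section~\ref{sec:pties-diff} is independent of Section~\ref{sec:1d_diffusions}. The trade-off is worth noting: your route proves more (the weaker condition~\eqref{eq:matsumoto} suffices, and you never actually use the coming-down-from-infinity hypothesis), but it rests on the analytic density estimates of Matsumoto underlying Theorem~\ref{thm:h-transfo-3-v2}, which the authors point out are hard to extend beyond one-dimensional diffusions; the paper's elementary moment argument is self-contained and is precisely the technique that transfers to other models, e.g.\ the jump process of Proposition~\ref{prop:jump}, which is why the paper presents both proofs.
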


We immediately deduce the next corollary.
\begin{cor}
  \label{cor:youpi}
  Assume that $X$ comes down from infinity and that $0$ is regular, i.e.\ $\int_0^1 m(dx)<\infty$. Then Condition~(B) is
  satisfied.
\end{cor}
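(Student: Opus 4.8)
The plan is to deduce Condition~(B) directly from Theorem~\ref{thm:SDE1}. Since the hypothesis that $X$ comes down from infinity appears both in the assumption of the corollary and in the conclusion~(B), it suffices to verify the two remaining hypotheses of Theorem~\ref{thm:SDE1}: that $X$ hits $0$ a.s.\ in finite time, and that the weighted integral $I$ obeys the polynomial bound~\eqref{eq:cond-moments}.

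First I would check that $\tau_\d<\infty$ a.s. As recalled in Section~\ref{sec:construction}, this is equivalent to $\int_0^1 y\,m(dy)<\infty$. Since $y\leq 1$ on $(0,1)$, the regularity assumption $\int_0^1 m(dx)<\infty$ gives
$$
\int_0^1 y\,m(dy)\leq\int_0^1 m(dy)<\infty,
$$
so the process indeed reaches $0$ in finite time. Next I would establish~\eqref{eq:cond-moments} with the explicit exponent $\rho=1$: for $x\in(0,1)$, bounding $y$ by $x$ throughout the interval of integration yields
$$
I(x)=\int_0^x y\,m(dy)\leq x\int_0^x m(dy)\leq\left(\int_0^1 m(dy)\right)x,
$$
which is precisely~\eqref{eq:cond-moments} with $C=\int_0^1 m(dy)<\infty$.

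With all the hypotheses of Theorem~\ref{thm:SDE1} verified, its conclusion provides, for each $t>0$, a constant $A_t<\infty$ such that $\P_x(t<\tau_\d)\leq A_t\,x$ for all $x>0$. Since $\{t<\tau_\d\}=\{\tau_\d>t\}$, fixing any $t_1>0$ and setting $A=A_{t_1}$ delivers exactly~\eqref{eq:hyp-diffusion}; combined with the standing hypothesis that $X$ comes down from infinity, this is Condition~(B). I do not expect any genuine obstacle here: the entire content is the elementary observation that local finiteness of $m$ near $0$ forces $I(x)$ to vanish at least linearly, which is the mildest rate admitted by~\eqref{eq:cond-moments}. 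The only point deserving care is the bookkeeping of the boundary classification, namely confirming that ``$0$ regular'' in the sense $\int_0^1 m(dx)<\infty$ is compatible with (indeed implies) the finite-hitting-time condition $\int_0^1 y\,m(dy)<\infty$ used by Theorem~\ref{thm:SDE1}.
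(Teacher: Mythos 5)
Your proof is correct and is essentially the paper's own argument: the corollary is stated as an immediate consequence of Theorem~\ref{thm:SDE1}, obtained exactly as you do by noting that $\int_0^1 m(dx)<\infty$ gives both $\int_0^1 y\,m(dy)<\infty$ (a.s.\ finite hitting time of $0$) and the bound $I(x)\leq \left(\int_0^1 m(dy)\right)x$, i.e.\ \eqref{eq:cond-moments} with $\rho=1$. The only minor technicality, that the theorem's proof assumes $\rho\notin\QQ$, is harmless since $I(x)\leq Cx$ on $(0,1)$ implies $I(x)\leq Cx^{\rho'}$ for any irrational $\rho'<1$.
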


\begin{rem}
  \label{rem:youpi}
  Equation~\eqref{eq:cond-moments} is satisfied if $m$ is absolutely continuous w.r.t.\ Lebesgue's measure $\Lambda$ on $(0,\infty)$
  and if
  $$
  \frac{dm}{d\Lambda}(x)\leq \frac{C}{x^{\alpha}}
  $$
  in the neighborhood of $0$, where $\alpha<2$. This corresponds to a diffusion process solution to the SDE
  $$
  dX_t=\sigma(X_t)dB_t
  $$
  with $\sigma(x)\geq cx^{\alpha/2}$ in the neighborhood of $0$.
\end{rem}

\begin{proof}
The proof is based on the following lemma.

\begin{lem}
  \label{prop:moment}
  For all $k\geq 1$ and $x\in\R_+$, the function $M_k(x)=
  \E_x\left(\tau_\d^k\right)$  is bounded and satisfies
  \begin{align}
    \label{eq:eq32}
    M_k(x)=2k\int_0^\infty (x\wedge y)\, M_{k-1}(y)\,m(dy)\, <\infty.
  \end{align}
\end{lem}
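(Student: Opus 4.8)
The plan is to prove the statement by induction on $k$, the crucial analytic input being the occupation-time (Green) identity for the diffusion absorbed at $0$,
\begin{equation}
  \E_x\left[\int_0^{\tau_\d} g(X_t)\, dt\right] = 2\int_0^\infty (x\wedge y)\, g(y)\, m(dy), \tag{$\star$}
\end{equation}
valid for every nonnegative measurable $g$. This is the $b\to\infty$ limit of the finite-interval Green formula already used in the proof of Theorem~\ref{thm:h-transfo}. First I would establish~($\star$). Since $X$ comes down from infinity and $\tau_\d<\infty$ a.s., the running maximum of $X$ on $[0,\tau_\d]$ is a.s.\ finite, so the exit time $T_{0,b}=\tau_\d\wedge T_b$ of the interval $(0,b)$ increases to $\tau_\d$ as $b\to\infty$. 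Green's formula on $(0,b)$ reads
\begin{equation*}
  \E_x\left[\int_0^{T_{0,b}} g(X_t)\, dt\right] = 2\int_0^b \frac{(x\wedge y)(b-x\vee y)}{b}\, g(y)\, m(dy).
\end{equation*}
As $b$ grows, the integrand $\frac{(x\wedge y)(b-x\vee y)}{b}g(y)\mathbbm{1}_{y<b}$ is nondecreasing in $b$ and converges pointwise to $(x\wedge y)g(y)$, while $T_{0,b}\uparrow\tau_\d$ makes the left-hand side nondecreasing as well; monotone convergence on both sides then yields~($\star$) as an identity in $[0,+\infty]$.

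Next I would run the induction. For $k=0$ we have $M_0\equiv 1$, which is bounded and measurable. Assume $M_{k-1}$ is bounded and measurable. The starting point for the recursion is the elementary pathwise identity $\tau_\d^k = k\int_0^{\tau_\d}(\tau_\d-t)^{k-1}\,dt$, whence
\begin{equation*}
  M_k(x)=k\,\E_x\left[\int_0^\infty \mathbbm{1}_{t<\tau_\d}(\tau_\d-t)^{k-1}\,dt\right].
\end{equation*}
On the event $\{t<\tau_\d\}$ the remaining absorption time satisfies $\tau_\d-t=\tau_\d\circ\theta_t$, so the Markov property gives $\E_x[(\tau_\d-t)^{k-1}\mid\mathcal F_t]=M_{k-1}(X_t)$ there; combining this with Fubini's theorem (legitimate since every integrand is nonnegative) transforms the display into
\begin{equation*}
  M_k(x)=k\,\E_x\left[\int_0^{\tau_\d} M_{k-1}(X_t)\,dt\right].
\end{equation*}
Applying~($\star$) with $g=M_{k-1}$, which is nonnegative and measurable by the induction hypothesis, produces exactly the claimed recursion $M_k(x)=2k\int_0^\infty (x\wedge y)M_{k-1}(y)\,m(dy)$.

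Finiteness and boundedness then drop out of the formula itself. Using $x\wedge y\le y$ and identity~($\star$) with $g=M_{k-1}$, one gets $M_k(x)\le 2k\|M_{k-1}\|_\infty\int_0^\infty y\,m(dy)$, which is finite and independent of $x$ because $\int_0^\infty y\,m(dy)<\infty$ by~\eqref{eq:ymdy_integrable} (a consequence of $X$ coming down from infinity and hitting $0$ a.s.\ in finite time); moreover $M_k$ is measurable, indeed nondecreasing, since $x\mapsto x\wedge y$ is. This closes the induction. The main obstacle is the rigorous justification of the half-line Green identity~($\star$): one must verify that $T_{0,b}\uparrow\tau_\d$ and handle the boundary factor $(b-x\vee y)/b$, which is precisely where the coming-down-from-infinity hypothesis and the integrability~\eqref{eq:ymdy_integrable} enter. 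The subsequent algebraic recursion, Markov-property step, and boundedness estimate are then routine.
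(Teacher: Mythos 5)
Your proof is correct and follows essentially the same route as the paper: the half-line Green identity obtained as a monotone limit of the finite-interval Green formula, combined with the pathwise identity $\tau_\d^k = k\int_0^{\tau_\d}(\tau_\d-t)^{k-1}\,dt$ and the Markov property, with boundedness coming from $\int_0^\infty y\,m(dy)<\infty$. The only (immaterial) difference is the order of the induction: the paper defines $M_k$ by the recursion and identifies it with $\E_x(\tau_\d^k)$ at the end, while you start from the moments and derive the recursion.
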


We admit this lemma for the moment and give its proof at the end of the subsection.

First, notice that we can assume without loss of generality that $\rho\not\in\QQ$. We prove by induction that there exists a
constant $C_k>0$ such that for all $x\in(0,1)$,
\begin{align}
  \label{eq:borne-moments}
  M_k(x)\leq C_k x^{(k\rho)\wedge 1}.
\end{align}
This is enough to conclude since one can choose $k$ such that $k\rho>1$ and use Markov's inequality to obtain
$$
\P_x(t<\tau_\d)\leq\frac{M_k(x)}{t^k}\leq C_t x
$$
where the constant $C_t$ depends on $t$ but not on $x$.

Equation~\eqref{eq:borne-moments} is trivial for $k=0$. Let us assume it is true for $k\geq 0$ and let us denote by $\alpha_k$ the
constant $(k\rho)\wedge 1$. For all $x\in(0,1)$, by~\eqref{eq:eq32},
\begin{align*}
M_{k+1}(x)&\leq C\left(\int_0^x y M_k(y)m(dy)+x\int_x^1 M_k(y)m(dy)+x\int_1^\infty M_k(y)m(dy)\right)\\
&\leq C\left(\int_0^x y^{1+\alpha_k}\,m(dy)+x\int_x^1 y^{\alpha_k-1}\,y\,m(dy)+x\int_1^\infty  m(dy)\right) \\
&\leq C \left[x^{\alpha_k} I(x)+ x\left(I(1)+(1-\alpha_k)\int_x^1 y^{\alpha_k-2}I(y)\,dy\right)+x\int_1^\infty m(dy)\right] \\
&\leq C x^{\alpha_k+\rho}+Cx +Cx\int_x^1 y^{\alpha_k+\rho-2}\,dy,
\end{align*}
where the constant $C>0$ depends on $k$ and may change from line to line, and the third inequality follows from integration by parts.
Since $\rho\not\in\QQ$, we have
$$
\int_x^1 y^{\alpha_k+\rho-2}\,dy\leq
\begin{cases}
  1/(\alpha_k+\rho-1) & \text{if }\alpha_k+\rho-1>0, \\
  x^{\alpha_k+\rho-1}/(1-\alpha_k-\rho) & \text{if }\alpha_k+\rho-1<0.
\end{cases}
$$
This concludes the induction.
\end{proof}

\begin{proof}[Proof of Lemma~\ref{prop:moment}]
Let us define the functions $M_k(x)$ recursively from the formula ~\eqref{eq:eq32} and $M_0(x)=1$. Our aim is hence to prove that, for all $k\geq 0$, $M_k$ is uniformly bounded and that, for all $x\in(0,+\infty)$, that $M_k(x)=\E_x(\tau_\d^k)$.

An immediate induction procedure
and~\eqref{eq:ymdy_integrable} proves that, for all $k\geq 0$, the function $M_k$ is bounded.

The Green function for diffusions on natural scale~\cite[Lemma 23.10]{kallenberg-02} implies that, for any $0<a<b<\infty$,
\begin{align*}
\E_x\left(\int_0^{T_{a,b}}M_{k-1}(X_t)\,dt\right)=2\int_a^b \frac{(x\wedge y-a)(b-x\vee y)}{b-a}M_{k-1}(y)\,m(dy).
\end{align*}
Now, letting $a\rightarrow 0$ (by monotone convergence) and $b\rightarrow \infty$ (by monotone convergence in the l.h.s and by Lebesgue's Theorem in the right hand side - remember that $M_{k-1}$ is bounded), we obtain
\begin{align*}
\E_x\left(\int_0^{\tau_\d}M_{k-1}(X_t)\,dt\right)=2\int_0^\infty (x\wedge y)\,M_{k-1}(y)\,m(dy)
\end{align*}
and hence
\begin{align*}
M_k(x)=k\,\E_x\left(\int_0^{\tau_\d}M_{k-1}(X_t)\,dt\right).
\end{align*}

Let us now prove by induction that $M_k(x)=\E(\tau_\d^k)$ for all $k\geq 0$. The property is true for $k=0$. Assume that $M_{k-1}(x)=\E_x(\tau_\d^{k-1})$, then
\begin{align*}
M_k(x)=k\E_x\left(\int_0^{\tau_\d}\E_{X_t}(\tau_\d^{k-1})\,dt\right)=k\E_x\left(\int_0^{\tau_\d} \left(\tau_\d-t\right)^{k-1}\,dt\right)=\E_x(\tau_\d^k),
\end{align*}
where we used the Markov property.
\end{proof}

\subsection{Conditions ensuring~(B) in relation with strict local martingales}
\label{sec:strict-local-martingales}

The next result gives a sufficient condition for (B), which is a weaker requirement than the one of Theorem~\ref{thm:SDE1}. It is a direct consequence of
Theorem~\ref{thm:h-transfo-3-v2}, the definition~\eqref{eq:def-tilde-m} of $\widetilde{m}$ and the equivalence~\eqref{eq:yh}.

\begin{thm}
  \label{thm:h-transfo-3}
  Assume that $X$ comes down from infinity and
  \begin{equation}
    \label{eq:matsumoto}
    \int_0^1\frac{1}{x}\,\sup_{y\leq x}\left(\frac{1}{y}\int_{(0,y)} z^2\,m(dz)\right)\,dx<\infty.
  \end{equation}
  Then, for all
  $t>0$, there exists $A_t<\infty$ such that
  \begin{align*}
    \P_x(t<\tau_\d)\leq A_t\, x,\ \forall x>0.
  \end{align*}
\end{thm}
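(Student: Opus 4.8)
The plan is to transport the problem to the inverse diffusion $Z=1/X$ supplied by the $h$-transform of Theorem~\ref{thm:h-transfo}, apply the strong-strict-martingale criterion of Theorem~\ref{thm:h-transfo-3-v2} to $Z$, and then read off the desired bound through the equivalence~\eqref{eq:yh} of Corollary~\ref{cor:h-transfo-2}. Concretely, let $Z$ be the diffusion on $[0,\infty)$ stopped at $0$, on natural scale, with speed measure $\widetilde{m}$ defined in~\eqref{eq:def-tilde-m}. Since $f(x)=1/x$ is a homeomorphism of $(0,\infty)$ and $1/z^2$ is positive and locally bounded there, $\widetilde{m}$ is locally finite and charges every open subset of $(0,\infty)$, so $(\widetilde{m},Z)$ meets the hypotheses of Theorem~\ref{thm:h-transfo-3-v2}. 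By~\eqref{eq:yh}, establishing $\P_x(t<\tau_\d)\leq A_t\,x$ for all $x>0$ is the same as establishing $\sup_{z>0}\E^Z_z(Z_t)\leq A_t<\infty$ for all $t>0$, i.e.\ that $Z$ is a strict martingale in the strong sense. Hence everything reduces to checking that $\widetilde{m}$ satisfies the integrability condition~\eqref{eq:matsumoto-bis}.

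The one genuine computation is to show that the assumed condition~\eqref{eq:matsumoto} on $m$ is exactly~\eqref{eq:matsumoto-bis} on $\widetilde{m}$ after inversion. First I would compute the tail of $\widetilde{m}$: by the definition of the pushforward, $\int\phi(z)\,(f*m)(dz)=\int\phi(1/z)\,m(dz)$, so applying this to $\phi(z)=z^{-2}\mathbbm{1}_{z>y}$ gives $\phi(1/z)=z^2\mathbbm{1}_{z<1/y}$ and therefore
\[
\int_y^\infty\widetilde{m}(dz)=\int\frac{\mathbbm{1}_{z>y}}{z^2}\,(f*m)(dz)=\int z^2\,\mathbbm{1}_{z<1/y}\,m(dz)=\int_{(0,1/y)}z^2\,m(dz),
\]
whence $y\int_y^\infty\widetilde{m}(dz)=y\int_{(0,1/y)}z^2\,m(dz)$. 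Substituting $x=1/u$ in the outer integral of~\eqref{eq:matsumoto-bis} (so that $\tfrac{dx}{x}=-\tfrac{du}{u}$ and $[1,\infty)$ maps to $(0,1]$) and $w=1/y$ inside the supremum (so that $y\geq 1/u$ becomes $w\leq u$ and $y\int_{(0,1/y)}z^2\,m(dz)=\tfrac{1}{w}\int_{(0,w)}z^2\,m(dz)$), the left-hand side of~\eqref{eq:matsumoto-bis} becomes
\[
\int_0^1\frac{1}{u}\,\sup_{w\leq u}\left(\frac{1}{w}\int_{(0,w)}z^2\,m(dz)\right)\,du,
\]
which is finite precisely by the hypothesis~\eqref{eq:matsumoto}.

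With~\eqref{eq:matsumoto-bis} verified for $\widetilde{m}$, Theorem~\ref{thm:h-transfo-3-v2} gives $\sup_{z>0}\E^Z_z(Z_t)<\infty$ for every $t>0$. Because the diffusion on natural scale with a prescribed speed measure is unique, this $Z$ coincides in law with the process $Z=1/X$ of Theorem~\ref{thm:h-transfo}, so Corollary~\ref{cor:h-transfo-2} and its equivalence~\eqref{eq:yh} apply and yield the claimed bound $\P_x(t<\tau_\d)\leq A_t\,x$ for all $x>0$. The only real obstacle is bookkeeping: keeping the two nested inversions $x\mapsto 1/x$ and $y\mapsto 1/y$ straight and confirming that the monotone rearrangement inside the supremum is correctly reversed, which is exactly why the $\sup_{y\geq x}$ of~\eqref{eq:matsumoto-bis} turns into the $\sup_{y\leq x}$ of~\eqref{eq:matsumoto}; every other ingredient is a direct appeal to results already proved.
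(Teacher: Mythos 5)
Your proposal is correct and follows exactly the paper's own route: the paper proves this theorem in one line as "a direct consequence of Theorem~\ref{thm:h-transfo-3-v2}, the definition~\eqref{eq:def-tilde-m} of $\widetilde{m}$ and the equivalence~\eqref{eq:yh}," and your change-of-variable computation showing that~\eqref{eq:matsumoto} for $m$ is precisely~\eqref{eq:matsumoto-bis} for $\widetilde{m}$ is the (correct) bookkeeping the paper leaves implicit. Nothing further is needed.
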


Note that Fubini's theorem implies that
\begin{align*}
  \frac{1}{2}\int_0^{1/2}z\,m(dz)\leq \int_0^1 z(1-z)\,m(dz) & =\int_0^1 \frac{1}{x^2}\int_0^x z^2\,m(dz) \\
  & \leq\int_0^1\frac{1}{x}\left(\sup_{y\leq x}\frac{1}{y}\int_0^y z^2\,m(dz)\right)dx.
\end{align*}
Hence~\eqref{eq:matsumoto} implies that $\int_0^1 z\,m(dz)<\infty$, i.e.\ $X$ hits 0 in a.s.\ finite time.

The last theorem is based (via Theorem~\ref{thm:h-transfo-3-v2}) on abstract but powerful general analytical results on the density
of diffusion processes, which are hard to extend to more general settings. But Condition (B) is simple enough to be checked for other
classes of models using elementary probability tools, as we did in Theorem~\ref{thm:SDE1}. We give in Subsection~\ref{sec:jump} a
simple example of a one-dimensional process with jumps where exponential convergence to the quasi-stationary distribution can be
proved by an easy extension of Theorem~\ref{thm:SDE1}.

\begin{rem}
  \label{rem:matsumoto-bis}
  As in Remark~\ref{rem:matsumoto}, in the case of a (weak) solution to the SDE $dX_t=\sigma(X_t)dB_t$, the
  condition~\eqref{eq:matsumoto} covers almost all the practical situations where $\int_0^\infty y\sigma^{-2}(y)\,dy<\infty$. For
  example, it is true if $X$ comes down from infinity and
  \begin{align*}
    \sigma(x)\geq Cx\sqrt{\log \frac{1}{x}\cdots \log_{k-1} \frac{1}{x}\left(\log_k \frac{1}{x}\right)^{1+\epsilon}}
  \end{align*} 
  for all $x$ in a neighborhood of $0$ and for some $\epsilon,C>0$ and $k\geq 1$.
\end{rem}

\subsection{Consequences of the expression of $\alpha$ as a function of $m$}
\label{sec:dalpha-dm}

In Theorem~\ref{thm:QSD_full}, we proved that, under Assmuption~(B), $\alpha$ is absolutely continuous with respect to $m$ and that
\begin{align}
\label{eq:alpha-m}
\frac{d\alpha}{dm}(x)=2\lambda_0 \int_0^\infty (x\wedge y)\alpha(dy),\ \forall x\in(0,+\infty).
\end{align}
Our goal here is to provide a converse property, i.e.\ to give a necessary and sufficient condition on a given positive measure
$\alpha$, such that it is the unique quasi-stationary distribution of a diffusion process on natural scale on $[0,+\infty)$ with
exponential convergence of the conditional laws.

We will say (with a slight abuse of notation) that a positive measure $m$ on $(0,\infty)$ satisfies~(B) if: it is locally finite,
gives positive mass on each open subset of $(0,\infty)$ and the diffusion process on $[0,\infty)$ on natural scale with speed measure
$m$ and stopped at 0 satisfies Condition~(B).

\begin{thm}
  Fix $\lambda_0>0$ and a probability measure $\alpha$ on $(0,+\infty)$ such that the measure $(\frac{1}{x}\vee 1) \alpha(dx)$
  satisfies~(B). Then, there exists a unique (in law) diffusion process $(X_t)_{t\geq 0}$ on $[0,+\infty)$ on natural scale a.s.\
  absorbed at $0$ satisfying
  \begin{align*}
    \left\|\P_x(X_t\in\cdot\mid t<\tau_\d)-\alpha\right\|_{TV}\leq Ce^{-\gamma t},\quad \forall x\in (0,+\infty),
  \end{align*}
  for some positive constants $C,\gamma$ and such that
  \begin{align*}
    \P_\alpha(t<\tau_\d)=e^{-\lambda_0 t}, \quad \forall t\geq 0.
  \end{align*}
  In addition, the speed measure of the process $X$ is given by
  \begin{align*}
    m(dx)=\frac{\alpha(dx)}{2\lambda_0\int_0^\infty (x\wedge y) \alpha(dy)},\quad \forall x\in(0,\infty).
  \end{align*}
  Conversely, if a speed measure $m$ satisfies (B), then the corresponding quasi-stationary distribution $\alpha$ is such that
  $(\frac{1}{x}\vee 1) \alpha(dx)$ satisfies~(B).
\end{thm}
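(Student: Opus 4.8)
The plan is to \emph{read off} the speed measure from the density identity \eqref{eq:expression-of-alpha} and then import Condition~(B) from the hypothesis by a coupling comparison. Set $G(x)=\int_0^\infty(x\wedge y)\,\alpha(dy)$ and define
$$m(dx)=\frac{\alpha(dx)}{2\lambda_0\,G(x)},$$
the unique measure for which \eqref{eq:alpha-m} holds with the prescribed $\alpha$ and $\lambda_0$. The function $G$ is concave and increasing with $G(0)=0$, $G(x)/x\to1$ as $x\to0$, and $G(x)\to\int_0^\infty y\,\alpha(dy)<\infty$ as $x\to\infty$ (the mean being finite because $(\tfrac1x\vee1)\alpha$ comes down from infinity, i.e.\ $\int_1^\infty y\,\alpha(dy)<\infty$). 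Since $G(x)/x$ is non-increasing this yields $G(x)\asymp x\wedge1$, hence $m\asymp\beta$ with $\beta(dx):=(\tfrac1x\vee1)\alpha(dx)$; in particular $m$ is locally finite on $(0,\infty)$, charges every open set and satisfies $\int_0^1 y\,m(dy)<\infty$.

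The transfer mechanism I would use is a comparison lemma: if $c\,m_1\le m_2\le C\,m_1$ for constants $0<c\le C<\infty$, then $m_1$ satisfies~(B) iff $m_2$ does. To prove it I would realise both diffusions as time changes of one and the same Brownian motion $B$ as in Section~\ref{sec:construction}, so that by \eqref{eq:lien-tau-d-brownien} one has $\tau_\d^{(i)}=\int_0^\infty L^x_{T^B_0}\,m_i(dx)$ with a common local time field $L$; nonnegativity of $L$ then gives $c\,\tau_\d^{(1)}\le\tau_\d^{(2)}\le C\,\tau_\d^{(1)}$ pathwise, whence $\P_x(\tau_\d^{(2)}>Ct_1)\le\P_x(\tau_\d^{(1)}>t_1)$ and symmetrically, while $\int_1^\infty y\,m_i(dy)<\infty$ is visibly comparison-invariant. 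Applied to $m\asymp\beta$ this shows $m$ satisfies~(B), so Theorem~\ref{thm:QSD_full} applies to the diffusion $X$ with speed measure $m$: it is a.s.\ absorbed at $0$, converges exponentially in total variation to a unique quasi-stationary distribution $\alpha^\ast$ at some rate $\lambda^\ast>0$, and $d\alpha^\ast/dm=2\lambda^\ast G^\ast$ with $G^\ast(x)=\int_0^\infty(x\wedge y)\,\alpha^\ast(dy)$ bounded.

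It remains to identify $(\alpha^\ast,\lambda^\ast)$ with $(\alpha,\lambda_0)$, and this is where I expect the real work. Using $-G''=\alpha=2\lambda_0 G\,m$ (and likewise for $G^\ast$), both $G$ and $G^\ast$ are positive, bounded solutions, vanishing at $0$, of the generalised eigenvalue problem $\tfrac{d}{dm}\tfrac{d}{dx}u=-2\lambda u$. The hard part is the uniqueness of such a \emph{positive} eigenelement: one must show that a positive eigenfunction of the Sturm--Liouville operator $\tfrac{d}{dm}\tfrac{d}{dx}$ with Dirichlet condition at $0$ and entrance boundary at $\infty$ is the principal one, that the principal eigenvalue is simple, and that the normalisation $u'(0^+)=\alpha((0,\infty))=1$ fixes the eigenfunction. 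For a fixed eigenvalue this follows from the Wronskian $(G^\ast)'G-G^\ast G'$ being constant and vanishing at $0$ (forcing $G\propto G^\ast$), and distinct eigenvalues are excluded by an oscillation/Sturm comparison argument; equality of eigenvalues gives $\lambda^\ast=\lambda_0$ and then $\alpha=\alpha^\ast$ after differentiating twice and normalising. Finally, for uniqueness in law, any diffusion $X'$ on natural scale absorbed at $0$ with the stated convergence has $\alpha$ as its unique quasi-stationary distribution with rate $\lambda_0$; the speed-measure characterisation \eqref{eq:alpha-m} of Theorem~\ref{thm:QSD_full} then forces its speed measure to equal $m$, and a regular diffusion on natural scale stopped at $0$ is determined in law by its speed measure (\cite[Thm.\,23.9]{kallenberg-02}).

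For the converse, assume a speed measure $m$ satisfies~(B). Theorem~\ref{thm:QSD_full} furnishes the quasi-stationary distribution $\alpha$ with $d\alpha/dm=2\lambda_0 G$ and $\int_0^\infty y\,\alpha(dy)<\infty$, so again $G\asymp x\wedge1$. Then $(\tfrac1x\vee1)\alpha(dx)=2\lambda_0\,(\tfrac1x\vee1)G(x)\,m(dx)$ with $(\tfrac1x\vee1)G(x)\asymp(\tfrac1x\vee1)(x\wedge1)=1$, so $(\tfrac1x\vee1)\alpha\asymp m$; the comparison lemma transfers~(B) from $m$ to $(\tfrac1x\vee1)\alpha$, which is exactly the claim.
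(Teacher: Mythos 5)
Your proposal follows the same skeleton as the paper's proof: the paper also reads $m$ off from~\eqref{eq:alpha-m}, also proves the comparison principle (if $m_1\le Cm_2$ and $m_2$ satisfies~(B) then so does $m_1$, obtained exactly as you do from the common time-changed Brownian motion of Section~\ref{sec:construction}), also shows $dm/d\alpha\asymp\frac1x\vee1$ (forward direction from $\int_0^\infty(1\vee y)\,\alpha(dy)<\infty$ via Remark~\ref{rem:jolie-rk}, converse from the moment bound $\int_0^\infty y\,\alpha(dy)<\infty$ of Theorem~\ref{thm:QSD_full}), and then invokes Theorem~\ref{thm:QSD_full}; this is the content of Lemma~\ref{prop:m-alpha}. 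Your converse and your uniqueness-in-law paragraphs coincide with the paper's (the latter is implicit there). The one genuine difference is the identification step $(\alpha^*,\lambda^*)=(\alpha,\lambda_0)$: the paper dispatches it in a single sentence of the proof of Lemma~\ref{prop:m-alpha} (``Theorem~\ref{thm:QSD_full} implies that $\alpha$ is proportional to the unique quasi-stationary distribution''), i.e.\ it silently uses uniqueness of probability solutions of~\eqref{eq:alpha-m}, whereas you correctly isolate this as the point requiring real work. On that point your write-up is more careful than the paper.

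However, the mechanism you propose for that step does not suffice as stated. Your Wronskian argument is fine for a \emph{common} rate: if $\lambda^*=\lambda_0$, then $W:=G (G^*)^+-G^* G^+$ satisfies $dW=0$, $W(0^+)=0$, hence $G\propto G^*$ and $\alpha=\alpha^*$. But ``distinct eigenvalues are excluded by an oscillation/Sturm comparison argument'' is false for rates \emph{below} the principal one: for every $\lambda<\lambda^*$ the Dirichlet solution of $\frac{d}{dm}\frac{d}{dx}u=-2\lambda u$, $u(0)=0$, is positive on $(0,\infty)$ (the case $\lambda=0$, $u(x)=x$, is the simplest example), so positivity of an eigenfunction on the half-line does \emph{not} imply it is principal, and oscillation theory gives no contradiction there. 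What excludes $\lambda_0\neq\lambda^*$ is the boundary behaviour at $+\infty$ that $G$ and $G^*$ inherit from being potentials of finite measures: $G^+(x)=\alpha((x,\infty))\to0$, $(G^*)^+(x)=\alpha^*((x,\infty))\to0$, and both functions are bounded (for $G$ this uses $\int_0^\infty y\,\alpha(dy)<\infty$, available in the forward direction by Remark~\ref{rem:jolie-rk}; for $G^*$ it is given by Theorem~\ref{thm:QSD_full}). Then $dW=2(\lambda_0-\lambda^*)\,G G^*\,m$ and integrating over $(0,\infty)$ gives
\begin{equation*}
  2(\lambda_0-\lambda^*)\int_0^\infty G G^*\,dm \;=\; W(\infty)-W(0^+)\;=\;0,
\end{equation*}
which forces $\lambda^*=\lambda_0$ because $GG^*>0$ and $m$ charges every open set; the constant-Wronskian argument then yields $G\propto G^*$ and, after normalisation, $\alpha=\alpha^*$. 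This single integrated identity replaces both halves of your uniqueness claim (it makes the oscillation argument unnecessary). With this repair your proof is complete; without it, the exclusion of rates below $\lambda^*$ is a genuine gap.
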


\medskip
Because of the relation~\eqref{eq:alpha-m} and the equivalence between (B) and uniform exponential convergence to a quasi-stationary distribution, this result follows from the next lemma.

\begin{lem}
\label{prop:m-alpha}
Let $\alpha$ be a probability measure on $(0,+\infty)$. Define the measure $m$ on $(0,+\infty)$ as
\begin{align}
\label{eq:m-alpha}
m(dx)=\frac{\alpha(dx)}{\int_0^\infty (x\wedge y) \alpha(dy)},\ \forall x\in(0,\infty).
\end{align}
Then the measure $(\frac{1}{x}\vee 1) \alpha(dx)$ satisfies (B) if and only if the measure $m$ satisfies (B).
\end{lem}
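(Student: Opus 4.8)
The plan is to compare the two diffusions by realising them as time changes of a single Brownian motion, after first reducing the two halves of (B) to their essential content. Write $\beta(dx):=(\frac1x\vee 1)\,\alpha(dx)=\frac{\alpha(dx)}{x\wedge 1}$ and $h(x):=\int_0^\infty (x\wedge y)\,\alpha(dy)$, so that $m(dx)=\alpha(dx)/h(x)$ and the two measures are mutually absolutely continuous with
\[
\rho(x):=\frac{dm}{d\beta}(x)=\frac{x\wedge 1}{h(x)}.
\]
First I would record the elementary behaviour of $\rho$: it is continuous and strictly positive on $(0,\infty)$, with $\rho(x)\to 1$ as $x\to 0$ (since $h(x)/x\to\alpha((0,\infty))=1$) and $\rho(x)\to 1/\bar\alpha$ as $x\to\infty$, where $\bar\alpha:=\int_0^\infty y\,\alpha(dy)\in(0,+\infty]$. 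Since $\rho$ is locally bounded above and below on $(0,\infty)$, the \emph{regularity} requirements in the definition of ``satisfies (B)'' (local finiteness and positive mass on open sets, and also a.s.\ absorption at $0$, which holds for both as $\int_0^1 y\,\beta(dy)=\alpha((0,1))<\infty$) transfer automatically between $m$ and $\beta$; and if $\alpha$ fails to charge some open set then neither side satisfies (B) and there is nothing to prove. It therefore remains to match the two genuine parts of (B): coming down from infinity (CDI) and the bound $\P_x(\tau_\d>t_1)\le Ax$ (call it Abs). Note that $\int_1^\infty y\,\beta(dy)=\int_1^\infty y\,\alpha(dy)$, so CDI for $\beta$ is precisely $\bar\alpha<\infty$.

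I would then split on the value of $\bar\alpha$. When $\bar\alpha<\infty$, the limits above give $0<\inf\rho\le\sup\rho<\infty$, so $c\,\beta\le m\le C\,\beta$ for positive constants $c,C$, and both measures come down from infinity. Realising $X^m$ and $X^\beta$ as time changes $B_{\sigma_t}$ of the \emph{same} Brownian motion $B$ with $B_0=x$, via $A^m_s=\int_0^\infty L^z_s\,m(dz)$ and $A^\beta_s=\int_0^\infty L^z_s\,\beta(dz)$, the inequality $c\,\beta\le m\le C\,\beta$ yields $c\,A^\beta_s\le A^m_s\le C\,A^\beta_s$ for all $s$, hence, using $\tau_\d=A_{T^B_0}$, the pathwise bound $c\,\tau^\beta_\d\le\tau^m_\d\le C\,\tau^\beta_\d$ almost surely. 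Consequently $\P^\beta_x(\tau_\d>t/c)\le\P^m_x(\tau_\d>t)\le\P^\beta_x(\tau_\d>t/C)$, and Abs for one process is equivalent to Abs for the other (after rescaling the constant $t_1$ by $C$ or $c$). Thus (B)$(m)\Leftrightarrow$(B)$(\beta)$ in this case.

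When $\bar\alpha=+\infty$, the measure $\beta$ fails CDI and so does not satisfy (B); it remains to check that $m$ does not either. Here I would argue by contradiction using the main theorem. If $m$ satisfied (B), then by Theorem~\ref{thm:QSD_full} the process $X^m$ would admit a unique quasi-stationary distribution $\alpha^*$ with $\int y\,\alpha^*(dy)<\infty$, and by Proposition~\ref{prop:2} the associated eigenfunction $\eta\propto\int_0^\infty(\cdot\wedge y)\,\alpha^*(dy)$ would be bounded. But the identity $\frac{d\alpha}{dm}(x)=h(x)=\int_0^\infty(x\wedge y)\,\alpha(dy)$ says exactly that the given $\alpha$ solves the eigen-relation~\eqref{eq:expression-of-alpha} (with $\lambda_0=1/2$) characterising the quasi-stationary distribution; by the uniqueness in Theorem~\ref{thm:QSD_full} we would get $\alpha=\alpha^*$, whence $h\propto\eta$ is bounded and $\bar\alpha=\lim_{x\to\infty}h(x)<\infty$, a contradiction. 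Therefore $m$ fails (B), and the equivalence holds (both sides false).

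The routine parts are the asymptotics of $\rho$ and the time-change comparison, which are elementary once $X^m$ and $X^\beta$ are driven by a common Brownian motion. The main obstacle is the case $\bar\alpha=+\infty$: showing that $m$ cannot satisfy (B). The delicate point is to justify that a normalizable positive measure solving the eigen-relation~\eqref{eq:expression-of-alpha} must be \emph{the} quasi-stationary distribution --- equivalently, that the nonnegative solution of $h=2\lambda\int_0^\infty(\cdot\wedge y)\,h\,dm$ is unique up to scaling (the principal eigenfunction). I would obtain this from the uniqueness of the quasi-stationary distribution in Theorem~\ref{thm:QSD_full}, rather than attempt a direct proof that $\int_1^\infty y\,m(dy)=\infty$ whenever $\bar\alpha=\infty$, which appears genuinely delicate (and may fail for oscillating $\alpha$, in which case it is Abs rather than CDI that breaks down for $m$).
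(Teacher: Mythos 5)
Your proposal is correct and follows essentially the same route as the paper's proof: both rest on (i) the fact that domination of speed measures transfers Condition~(B) through the common-Brownian-motion time change (your pathwise bound $c\,\tau^{\beta}_\d\le\tau^{m}_\d\le C\,\tau^{\beta}_\d$ is exactly the paper's opening observation that $m_1\le Cm_2$ preserves~(B)), and (ii) the identification, when $m$ satisfies~(B), of the given $\alpha$ with the unique quasi-stationary distribution of Theorem~\ref{thm:QSD_full}, which forces $\int_0^\infty y\,\alpha(dy)<\infty$; your case split on $\bar\alpha=\int_0^\infty y\,\alpha(dy)$ is just a reorganization of the paper's two one-sided comparisons $m\le C\beta$ and $\beta\le Cm$. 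The ``delicate point'' you flag in step (ii) is glossed at exactly the same level in the paper's own proof; the missing bridge (a probability measure solving~\eqref{eq:expression-of-alpha} is itself a QSD, so that uniqueness applies) can be supplied with tools already in the paper: $h:=d\alpha/dm$ satisfies $h=\tfrac12\,\E_{\cdot}\!\int_0^{\tau_\d}h(X_s)\,ds$ by~\eqref{eq:green-function_0_infty}, whence $P_th=e^{-t/2}h$ by the Markov property, and then $\alpha P_t=e^{-t/2}\alpha$ by Lemma~\ref{lem:reversibility} (applicable since $h(x)\le x$).
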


\begin{proof}
  Note that, if $m_1,m_2$ are two locally finite measures on $(0,+\infty)$ giving positive mass to each open subset of $(0,\infty)$,
  such that $m_1\leq C m_2$ for some positive constant $C>0$  and such that $m_2$ satisfies (B), then $m_1$ also satisfies (B). This is a direct consequence of the
  construction of diffusion processes given in Section~\ref{sec:construction} and of~\eqref{eq:As}.

  Assume first that $(\frac{1}{x}\vee 1) \alpha(dx)$ satisfies (B). Then, using Remark~\ref{rem:jolie-rk},
  \begin{align}
    \label{eq:alpha-integrable}
    \int_0^\infty (1\vee y)\,\alpha(dy)<\infty.
  \end{align}
  Thus
  \begin{align}
    \label{eq:preuve-lemme}
    \int_0^\infty (x\wedge y) \alpha(dy)\leq \left(x\int_0^\infty \alpha(dy)\right)\wedge\left(\int_0^\infty y\,\alpha(dy)\right)<\infty.
  \end{align}
  Hence $m\neq 0$, is locally finite and gives positive mass to each open subset of $(0,\infty)$.
  Since
  \begin{align*}
    \int_0^\infty (x\wedge y)\alpha(dy)\geq
    \begin{cases}
      x\int_1^\infty \alpha(dy),&\text{ if }x\leq 1,\\
      \int_1^\infty\alpha(dy),&\text{ if }x>1,
    \end{cases}
  \end{align*}
  we have
  $$
  \frac{dm}{d\alpha}(x)=\left(\int_0^\infty (x\wedge y) \alpha(dy)\right)^{-1}\leq\left(\int_1^\infty \alpha(dy)\right)^{-1}
  \left(\frac{1}{x}\vee 1\right). 
  $$
  Therefore, if $(\frac{1}{x}\vee 1) \alpha(dx)$ satisfies (B), $m$ satisfies (B).

  Conversely, if $m$ satisfies (B), Theorem~\ref{thm:QSD_full} implies that $\alpha$ is proportional to the unique quasi-stationary
  distribution of $X$ with speed measure $m$ and satisfies
  $$
  \int_0^\infty y\,\alpha(dy)<\infty.
  $$
  Since $\alpha$ is a finite measure, we deduce from~\eqref{eq:preuve-lemme} that
  $$
  \int_0^\infty (x\wedge y) \alpha(dy)\leq C(x\wedge 1)
  $$
  for some constant $C<\infty$. Hence
  $$
  \frac{dm}{d\alpha}(x)\geq C^{-1}\left(\frac{1}{x}\vee 1\right),
  $$
  i.e.\ $(\frac{1}{x}\vee 1) \alpha(dx)$ satisfies (B).
\end{proof}

\subsection{Examples}
\label{sec:examples}

\subsubsection{On general diffusions}
\label{sec:ex1}

Let us first recall that our results also cover the case of general (i.e.\ not necessarily on natural scale) diffusion processes
$(Y_t,t\geq 0)$ on $[a,b)$ such that $-\infty <a<b\leq+\infty$ with speed measure $m_Y$, which hits $a$ in a.s.\ finite time $\tau_\d$,
absorbed at $a$ and such that $b$ is a natural or entrance boundary. Under these assumptions, there exists a continuous and strictly
increasing scale function $s:[a,b)\rightarrow[0,\infty)$ of the process $Y$ such that $s(a)=0$ and $s(b-)=+\infty$. Our results then
apply to the process $X_t=s(Y_t)$ which is a diffusion on natural scale on $[0,\infty)$, whose speed measure is given by
\begin{align}
  \label{eq:m_X}
  m_X(dx)=s*m_Y(dx),
\end{align}
where $s*m_Y$ is the pushforward measure of $m_Y$ through the function $s$ (this follows for instance
from~\cite[Thm.\,VII.3.6]{Revuz1999}).

Hence $X$ satisfies (B) if and only if $Y$ satisfies the following Condition~(B').

\bigskip\noindent\textbf{Condition (B')} Assume that $\int_c^b s(y)\,m_Y(dy)<\infty$ for some $c\in(a,b)$ and there exist two
constants $t_1,A>0$ such that
\begin{equation*}
  \PP(t_1<\tau_\d\mid Y_0=y)\leq As(y),\quad\forall y>a.
\end{equation*}
\smallskip

Under this assumption, the process $Y$ satisfies~\eqref{eq:expo-cv} for a unique quasi-stationary distribution $\alpha_Y$ on $(a,b)$ and $\alpha_Y$ satisfies
$$
\int_a^b s(y)\alpha_Y(dy)<\infty.
$$
Moreover
$$
\frac{d\alpha_Y}{dm_Y}(y)=2\lambda_0\int_a^b (s(y)\wedge s(z))\alpha_Y(dz),
$$
where $\lambda_0$ is such that $\PP(t<\tau_\d\mid Y_0\sim\alpha_Y)=e^{-\lambda_0 t}$. Theorem~\ref{thm:h-transfo-3} entails the
following result.

\begin{prop}
  \label{prop:Y-ne-e}
  With the previous notation, if $\int_a^b s(y)\,m_Y(dy)<\infty$ and
  \begin{equation}
    \label{eq:Y-ne-e}
    \int_0^1\frac{1}{x}\left(\sup_{y\in(a,s^{-1}(x)]}\frac{1}{s(y)}\int_a^y s(z)^2\,m_Y(dz)\right)dx<\infty,
  \end{equation}
  then Condition~(B') is satisfied.
\end{prop}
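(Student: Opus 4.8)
The plan is to reduce Condition~(B') for $Y$ to Condition~(B) for the natural-scale diffusion $X_t=s(Y_t)$, and then to obtain the latter by applying Theorem~\ref{thm:h-transfo-3} to $X$. As recalled just before the statement, $X$ is a diffusion on natural scale on $[0,\infty)$ with speed measure $m_X=s*m_Y$ given by~\eqref{eq:m_X}, and $X$ satisfies~(B) if and only if $Y$ satisfies~(B'). Hence it suffices to check the two hypotheses of Theorem~\ref{thm:h-transfo-3} for $m_X$: that $X$ comes down from infinity, i.e.\ $\int_1^\infty y\,m_X(dy)<\infty$, and that $m_X$ satisfies the integral condition~\eqref{eq:matsumoto}. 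Throughout I would use the pushforward identity $\int_0^\infty g(w)\,m_X(dw)=\int_a^b g(s(v))\,m_Y(dv)$, valid for every nonnegative measurable $g$.

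For the first hypothesis, taking $g(w)=w\,\mathbbm{1}_{w>1}$ gives
\[
\int_1^\infty y\,m_X(dy)=\int_{\{v:\,s(v)>1\}} s(v)\,m_Y(dv)\leq\int_a^b s(v)\,m_Y(dv)<\infty
\]
by the standing assumption of the proposition, so $X$ comes down from infinity.

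The heart of the argument is to recognize that~\eqref{eq:matsumoto} applied to $m_X$ is exactly~\eqref{eq:Y-ne-e}. Using the pushforward identity with $g(w)=w^2\,\mathbbm{1}_{(0,y)}(w)$, and the fact that $s$ is a continuous increasing bijection from $[a,b)$ onto $[0,\infty)$ with $s(a)=0$ and inverse $s^{-1}$, the inner integral in~\eqref{eq:matsumoto} becomes $\int_{(0,y)} z^2\,m_X(dz)=\int_{(a,\,s^{-1}(y))} s(v)^2\,m_Y(dv)$. Performing inside the supremum the change of variable $w=s^{-1}(y)$, under which $0<y\leq x$ becomes $a<w\leq s^{-1}(x)$, yields
\[
\sup_{y\leq x}\frac{1}{y}\int_{(0,y)} z^2\,m_X(dz)=\sup_{w\in(a,\,s^{-1}(x)]}\frac{1}{s(w)}\int_a^w s(v)^2\,m_Y(dv),
\]
the outer variable $x$ being left untouched. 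Substituting this into $\int_0^1\frac{1}{x}(\cdots)\,dx$ shows that~\eqref{eq:matsumoto} for $m_X$ coincides with the assumed finite quantity~\eqref{eq:Y-ne-e}. Theorem~\ref{thm:h-transfo-3} then provides, for each $t>0$, a constant $A_t<\infty$ with $\P_x(t<\tau_\d)\leq A_t\,x$ for all $x>0$; together with the fact that $X$ comes down from infinity, this is Condition~(B) for $X$, hence Condition~(B') for $Y$.

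I expect the only delicate point to be the bookkeeping in the change of variables: one must keep track of the fact that the outer integration variable $x$ in~\eqref{eq:matsumoto} lives in the natural-scale world and is \emph{not} transformed, whereas the substitution $z=s(v)$ acts on the integrating measure and $w=s^{-1}(y)$ acts only on the index of the supremum. The endpoint conventions (whether the inner domain is $(a,w)$ or $[a,w]$, and whether the supremum runs over $y<x$ or $y\leq x$) are immaterial for the finiteness of the integrals, since they affect the integrand on at most a countable set of levels $y$ and $m_Y(\{a\})=0$.
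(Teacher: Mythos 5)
Your proof is correct and takes essentially the same route as the paper, which presents Proposition~\ref{prop:Y-ne-e} precisely as a direct consequence of Theorem~\ref{thm:h-transfo-3} applied to the natural-scale diffusion $X=s(Y)$ with speed measure $m_X=s*m_Y$. Your two verifications --- that $\int_a^b s(y)\,m_Y(dy)<\infty$ gives $\int_1^\infty y\,m_X(dy)<\infty$ (coming down from infinity), and that \eqref{eq:Y-ne-e} is exactly the pushforward transcription of \eqref{eq:matsumoto} --- merely spell out the change of variables that the paper leaves implicit.
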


Let us also mention that our methods can be easily extended to diffusion processes on a bounded interval $[a,b]$ where both boundary
points are either exit or regular (reflecting or absorbing). In the case where $b$ is exit or regular absorbing (i.e.\ $\tau_\d$ is
the first hitting time of $\{a,b\}$), we obtain two conditions of the form~\eqref{eq:Y-ne-e} in the neighborhood of $a$ and $b$.

To illustrate the generality of the processes that are covered by our criteria, we give two simple examples where the speed measure
$m$ is singular with respect to Lebesgue's measure $\Lambda$. The case of speed measures absolutely continuous w.r.t.\ Lebesgue's
measure (i.e.\ of SDEs) will be discussed in the next subsection. 

\paragraph{Example 1}

We recall that a diffusion process on $\RR$ with speed measure $\Lambda+\delta_0$ is called a \emph{sticky Brownian
  motion}~\cite{ito-mckean-74,amir-91}. We consider a diffusion process on $[0,\infty)$ which comes down from infinity ($\int_1^\infty
y\,m(dy)<\infty$) and which is a sticky Brownian motion on $[0,1]$, ``sticked'' at the points $a_1,a_2,\ldots$, where $(a_i)_{i\geq 1}$
is decreasing, converges to 0 and $a_1<1$, i.e.
$$
\restriction{m}{(0,1)}=\restriction{\Lambda}{(0,1)}+\sum_{i\geq 1}\delta_{a_i}.
$$
Assuming that there exist constants $C,\rho>0$ such that for all $j\geq 1$,
\begin{align}
\label{eq:lesai}
\sum_{i\geq j}a_i\leq C a^\rho_{j},
\end{align}
then for all $x\in (0,1)$, defining $i_0:=\inf\{j\geq 1: a_j< x\}$,
$$
\int_{(0,x)} y\,m(dy)=\frac{x^2}{2}+\sum_{i\geq i_0} a_i\leq \frac{x^2}{2}+Ca^\rho_{i_0}\leq \frac{x^2}{2}+Cx^\rho,
$$
and we can apply Theorem~\ref{thm:SDE1}.

For example, the choice $a_i=i^{-\frac{1}{1-\rho}}$, for all $i\geq 1$, satisfies~\eqref{eq:lesai}.

%
%

\paragraph{Example 2} Let $X$ be a sticky Brownian motion stopped at $-1$ and $1$. This means that $X$ is a diffusion on natural
scale with speed measure $m(dx)=\Lambda(dx)+\delta_0(dx)$ on $(-1,1)$, absorbed at $-1$ and $1$. As mentionned above, it is
straightforward to adapt our results to processes on $[-1,1]$. For the process of this example, the corresponding Assumption~(B) is
clearly fulfilled since both boundaries $-1$ and $1$ are regular for $X$ (see Corollary~\ref{cor:youpi}). Then the unique
quasi-stationary distribution $\alpha$ of $X$ is
\begin{align*}
\alpha(dx)=\frac{\gamma^*}{2}\sin\left(\gamma^*(1+x)\wedge(1-x)\right)m(dx),
\end{align*}
where $\gamma^*$ is the unique solution in $(0,\pi]$ of $\text{cotan }\gamma=\gamma/2$.

Indeed, it satisfies the following adaptation of formula~\eqref{eq:expression-of-alpha}
%
\begin{align}
\label{eq:abc}
\frac{d\alpha}{dm}(x)=\lambda_0\int_{-1}^1 (x\wedge y+1)(1-x\vee y)\,\alpha(dy),\ \forall x\in(-1,1).
\end{align}
In particular, the measure $\alpha$ is absolutely continuous with respect to Lebesgue's measure $\Lambda$ on $(-1,1)\setminus\{0\}$ and satisfies
\begin{align*}
\left(\frac{d\alpha}{d\Lambda}\right)''(x)=-2\lambda_0\frac{d\alpha}{d\Lambda}(x),\ \forall x\neq 0.
\end{align*}
Using the symmetry of the problem and the $0$ boundary conditions, we deduce that there exist constants $a,b\in\R$ such that
\begin{align*}
\alpha(dx)=a\delta_0(dx)+b\sin\left(\gamma(1+x)\wedge(1-x)\right)\Lambda(dx),
\end{align*}
where $\gamma=\sqrt{2\lambda_0}$. Note that the fact that $\alpha$ is a positive measure implies that $\gamma\in(0,\pi]$. In
addition, equality~\eqref{eq:abc} at $x=0$ entails
\begin{align*}
a=\lambda_0\left(2b\int_0^1 (1-y)\sin(\gamma(1-y))dy+a\right)
\end{align*}
and hence
\begin{align}
\label{eq:abc1}
(1-\gamma^2/2)a= b\left(\sin\gamma-\gamma\cos\gamma\right).
\end{align}
In addition, $d\alpha/dm(x)$ is continuous at $0$ by Remark~\ref{rem:densite-alpha}, hence
\begin{align}
\label{eq:abc2}
a=b\sin \gamma .
\end{align}
Finally, $\alpha$ is a probability measure, so that
\begin{align}
\label{eq:abc3}
a+\frac{2b}{\gamma}(1-\cos\gamma)=1.
\end{align}
Now, dividing~\eqref{eq:abc1} by~\eqref{eq:abc2}, we obtain $\gamma=\gamma^*$. Equality~\eqref{eq:abc3} becomes
\begin{align*}
1=a-b\sin\gamma+\frac{2b}{\gamma}.
\end{align*}
By~\eqref{eq:abc2}, we deduce that $b=\gamma/2$ and finally that $a=\frac{\gamma\sin\gamma}{2}$.

\subsubsection{On processes solutions of stochastic differential equations}

Let $Y$ be the solution of the general SDE on $[a,b)$ with $a<b\leq+\infty$
\begin{equation}
  \label{eq:EDS-generale}
  dY_t=\sigma(Y_t)dB_t+\mu(Y_t)dt,
\end{equation}
absorbed at $a$, where $\mu/\sigma^2\in L^1_{\text{loc}}((a,b))$ (which ensures the existence of a weak
solution~\cite[Ch.\,23]{kallenberg-02}, possibly exposive if $b=+\infty$, or which can possibly hit $b$ if $b<\infty$). In this case,
a scale function of $Y$ is given by
\begin{equation}
  \label{eq:scale-function-general}
  s(x)=\int_c^x\exp\left(-\int_c^y 2\mu(z)\sigma^{-2}(z)dz\right)dy,\ \forall x\in (a,b)  
\end{equation}
for any $c\in(a,b)$, and the speed measure by $m_Y(dy)=2/(s'(y)\sigma^2(y))\Lambda(dy)$. In the case where $s(a+)>-\infty$, one can
take $c=a$ in Formula~\eqref{eq:scale-function-general}, so that $s(a)=0$. If in addition $s(b-)=\infty$ (which implies in particular
that~\eqref{eq:EDS-generale} is non-explosive if $b=\infty$ or cannot hit $b$ in finite time if $b<\infty$), the
formula~\eqref{eq:scale-function-general} can be used to check the condition~\eqref{eq:Y-ne-e} in Proposition~\ref{prop:Y-ne-e}.

In particular, our criteria apply to cases of speed measures with discontinuous densities with respect to Lebesgue's measure, i.e.\
to cases of discontinuous $\sigma$. Such diffusions arise as rescaled solutions of SDEs driven by their local times, including skew
Brownian motions (see for instance~\cite{legall-83}). Hence our criteria also extend to one-dimensional SDEs driven by their local
time in the sense of~\cite{legall-83}.

Note that a major part of the existing literature~\cite{Cattiaux2009,littin-12,kolb-steinsaltz-12} on the subject only considers solutions to SDEs with absolutely 
continuous coefficient $\sigma$. The reason behind this is that all the previously cited works are based on Sturm-Liouville theory, which is particularly well suited for the study of diffusion processes $(Y_t,t\geq 0)$ on $[0,\infty)$  solution to a SDE of the form
\begin{equation*}
  \label{eq:EDS}
  dY_t=dB_t-q(Y_t)dt.
\end{equation*}
After a change of scale, this class of processes can only provide solutions to SDEs of the form~\eqref{eq:EDS-generale} with absolutely
continuous $\sigma$.

\subsubsection{A simple example with jumps}
\label{sec:jump}

The following simple example shows how our method can be easily extended to processes with jumps.

We consider a diffusion process $(X_t,t\geq 0)$ on $[0,\infty)$ with speed measure $m$ satisfying Assumption~(B).
Let us denote by $\mathcal{L}$ the infinitesimal generator of $X$. We consider the Markov process $(\tX_t,t\geq 0)$ with
infinitesimal generator
$$
\widetilde{\mathcal{L}}f(x)=\mathcal{L}f(x)+(f(x+1)-f(x))\mathbbm{1}_{x\geq 1},
$$
for all $f$ in the domain of $\mathcal{L}$. In other words, we consider a c\`adl\`ag process following a diffusion process with speed
measure $m$ between jump times, which occur at the jump times of an independent Poisson process $(N_t,t\geq 0)$ of rate $1$, with
jump size $+1$ if the process is above 1, and $0$ otherwise. We denote by $\widetilde{\tau}_\d$ its first hitting time of $0$.

The proof of the following proposition is postponed to Subsection~\ref{sec:proof-prop-jump}.

\begin{prop}
  \label{prop:jump}
  Under the previous assumptions, there exist a unique probability measure $\alpha$ on $(0,\infty)$ and two constants $C,\gamma>0$
  such that, for all initial distribution $\mu$ on $(0,\infty)$,
  \begin{align}
    \label{eq:expo-cv-jump}
    \left\|\PP_\mu(\widetilde{X}_t\in\cdot\mid t<\widetilde{\tau}_\partial)-\alpha(\cdot)\right\|_{TV}\leq C e^{-\gamma t},\ \forall t\geq 0.
  \end{align}
  In particular, the probability measure $\alpha$ is the unique quasi-stationary distribution of the process.
\end{prop}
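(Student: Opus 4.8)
The plan is to reduce the jump process $(\widetilde X_t)$ to the underlying diffusion $X$ and then invoke the already-established equivalence of Theorem~\ref{thm:QSD_full}. The strategy is to verify the abstract criterion for uniform exponential convergence to a quasi-stationary distribution, namely a Doeblin-type (Champagnat--Villemonais) condition: it suffices to exhibit $t_1>0$, a probability measure $\nu$ on $(0,\infty)$, and constants $c_1,c_2>0$ such that
\begin{align}
  \label{eq:plan-A1}
  \P_x(\widetilde X_{t_1}\in\cdot\mid t_1<\widetilde\tau_\d)\geq c_1\,\nu(\cdot),\quad\forall x\in(0,\infty),
\end{align}
together with a matching survival comparison
\begin{align}
  \label{eq:plan-A2}
  \P_\nu(t_1<\widetilde\tau_\d)\geq c_2\,\P_x(t_1<\widetilde\tau_\d),\quad\forall x\in(0,\infty).
\end{align}
These two conditions are known to be equivalent to~\eqref{eq:expo-cv-jump}, and the bulk of the work is to transfer them from $X$ to $\widetilde X$.

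First I would record the key structural facts about $\widetilde X$: up to the first jump time it is distributed exactly as $X$, the jumps are of size $+1$ and occur at rate $1$ only above level $1$, and crucially the jumps only push the process \emph{up}, so absorption at $0$ can only be delayed by a jump. This gives the elementary pathwise comparison $\widetilde\tau_\d\geq\tau_\d$ in the natural coupling where $\widetilde X$ and $X$ share the same driving diffusion between jumps. Next I would establish the analogue of Condition~(B) for $\widetilde X$, i.e.\ a bound of the form $\P_x(t<\widetilde\tau_\d)\leq A_t\,x$ for small $x$. The idea is that starting from a small $x<1$, no jump can occur until the process first reaches level $1$, so on the event of early extinction the process behaves exactly like $X$; more precisely, by conditioning on the first hitting time $T_1$ and using the strong Markov property, $\P_x(t<\widetilde\tau_\d)\leq \P_x(T_1<\tau_\d)+\P_x(T_1<t,\,T_1<\tau_\d)\sup_{?}(\cdots)$, and the first term equals $x$ by the natural-scale formula of Section~\ref{sec:construction}. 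This yields the upper bound needed for~\eqref{eq:plan-A2}.

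For the lower-bounding mixing condition~\eqref{eq:plan-A1}, I would use that $X$ itself satisfies the Champagnat--Villemonais condition (since (B) holds for $m$, by Theorem~\ref{thm:QSD_full}), so $X$ conditioned to survive mixes toward a reference measure on a compact subset of $(0,\infty)$; because $\widetilde X$ dominates $X$ from below (jumps only help survival) one can force $\widetilde X$ to lie in a fixed compact interval at time $t_1$ with probability bounded below uniformly in $x$ by combining the coming-down-from-infinity property (which controls large $x$ uniformly, via the entrance boundary at $+\infty$) with a no-jump event of positive probability on a bounded time window. The coming down from infinity is exactly what makes the estimate uniform over \emph{all} $x>0$, including $x\to\infty$.

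The hard part will be handling the region near $+\infty$ together with the jumps simultaneously: since jumps of size $+1$ fire at rate $1$ above level $1$, a path started very high can be pushed higher, and I must ensure the entrance-boundary estimate for $X$ (uniform control of the time to come down below a fixed level) is not destroyed by finitely many upward unit jumps on a bounded time interval. The clean way is to dominate $\widetilde X$ over $[0,t_1]$ by the diffusion $X$ shifted upward by $N_{t_1}$, the number of Poisson jumps, and to note that $N_{t_1}$ has exponential tails, so with high probability only a bounded number of jumps occur; on that event the coming-down estimate for $X$ applies with a deterministically shifted starting point. Once both~\eqref{eq:plan-A1} and~\eqref{eq:plan-A2} are verified with constants independent of $x$, the abstract equivalence delivers~\eqref{eq:expo-cv-jump} and the uniqueness of $\alpha$ as the quasi-stationary distribution, completing the proof.
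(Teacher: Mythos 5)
Your outline coincides with the paper's own proof strategy: verify the Champagnat--Villemonais conditions (A1)--(A2) for $\tX$ by transferring them from $X$, using exactly the two structural facts you identify, namely that $\tX=X$ up to $T_1$ when started below $1$ (whence, for $x<1$, $\P_x(t<\widetilde{\tau}_\d)\leq\P_x(t<\tau_\d)+\P_x(T_1\leq \tau_\d)\leq (A+1)x$, which is the paper's key estimate), and that the no-jump event $\{N_t=0\}$ is independent of the diffusion, has probability $e^{-t}$, and on it $\tX\equiv X$. The problems are in how you assemble these ingredients.

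First, the two conditions are misallocated. Your minorization condition, as stated (``force $\tX$ into a fixed compact at time $t_1$ with probability bounded below uniformly in $x$''), is impossible \emph{unconditionally} as $x\to 0$, since $\P_x(t_1<\widetilde{\tau}_\d)$ itself is $O(x)$; the near-zero bound you derived is precisely what repairs this, but you assign it to the survival comparison instead. In the paper, (A1) for $\tX$ is obtained by writing $\P_x(\tX_{t_0}\in\cdot)\geq e^{-t_0}\P_x(X_{t_0}\in\cdot)\geq e^{-t_0}c_1\nu(\cdot)\P_x(t_0<\tau_\d)$ and then dividing by $\P_x(t_0<\widetilde{\tau}_\d)$, the division being controlled by combining $\P_x(t_0<\widetilde{\tau}_\d)\leq(A+1)x$ with the converse bound $\P_x(t_0<\tau_\d)\geq ax$ of Remark~\ref{rem:cond-(B)}. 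Conversely, your survival comparison is stated at the single time $t_1$, whereas condition (A2) requires $\P_\nu(t<\widetilde{\tau}_\d)\geq c_2\P_x(t<\widetilde{\tau}_\d)$ for \emph{all} $t\geq 0$ and all $x$; a one-time comparison is not equivalent to~\eqref{eq:expo-cv-jump}, and the near-zero bound does not yield it. What (A2) actually needs is the long-time machinery of Step~4 of Subsection~\ref{sec:proof-first-part}: monotonicity of $x\mapsto \P_x(\tT_a\leq t)$ by coupling, a uniform exponential moment $\sup_{x\geq b}\E_x\bigl(e^{\rho \tT_b}\bigr)<\infty$ (the analogue of~\eqref{eq:moment-expo}), and Lemma~\ref{le:lemme1}; nothing in your sketch controls the ratio of survival probabilities for large $x$ and large $t$.

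Second, your ``clean way'' of preserving descent from infinity --- dominating $\tX$ on $[0,t_1]$ by $X$ shifted upward by $N_{t_1}$ --- is not valid for a general speed measure: the diffusion is not translation invariant, so after a jump the diffusion restarted from $\tX_{J}+1$ is not the pre-jump path plus $1$, and two copies of the diffusion started one unit apart need not remain within distance $1$ under any coupling. The correct device is the other one you mention: on $\{N_{t_\varepsilon}=0\}$ one has $\tT_{a_\varepsilon}\leq t_\varepsilon$ whenever $T_{a_\varepsilon}\leq t_\varepsilon$, so by independence $\inf_{x}\P_x(\tT_{a_\varepsilon}\leq t_\varepsilon)\geq e^{-t_\varepsilon}\inf_x\P_x(T_{a_\varepsilon}\leq t_\varepsilon)$, and the entrance-boundary estimate for $X$ transfers with a constant factor. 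This is exactly how the paper establishes the analogue of~\eqref{eq:moment-expo} for $\tX$, after which Step~4 applies unchanged and delivers (A2), hence~\eqref{eq:expo-cv-jump} and the uniqueness of $\alpha$.
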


\section{Proof of the results of Section~\ref{sec:1d_diffusions}}
\label{sec:proof-QSD-diff}

The main part of Theorem~\ref{thm:QSD_full}, the uniform convergence in~\eqref{eq:convergence-to-eta} and some of the results of
Proposition~\ref{prop:2} are proved using the criterion developed in~\cite{ChampagnatVillemonais2016} for general Markov
processes. More precisely, the following condition (A) is equivalent to Condition~(ii) of Theorem~\ref{thm:QSD_full}
(\cite[Thm.\,2.1]{ChampagnatVillemonais2016}), implies property~\eqref{eq:convergence-to-eta} and the fact that $\eta$ is bounded and
$\alpha(\eta)=1$. (\cite[Prop.\,2.3]{ChampagnatVillemonais2016}).

\paragraph{Assumption~(A)}
There exists a probability measure $\nu$ on $(0,\infty)$ such that
\begin{itemize}
\item[(A1)] there exist $t_0,c_1>0$ such that for all $x>0$,
  $$
  \PP_x(X_{t_0}\in\cdot\mid t_0<\tau_\partial)\geq c_1\nu(\cdot);
  $$
\item[(A2)] there exists $c_2>0$ such that for all $x>0$ and $t\geq 0$,
  $$
  \PP_\nu(t<\tau_\partial)\geq c_2\PP_x(t<\tau_\partial).
  $$
\end{itemize}

Hence, we need first to prove that (B) is equivalent to (A) (in Subsection~\ref{sec:proof-first-part}), second, to
prove~\eqref{eq:formula_eta} and that $\eta(x)\leq Cx$ (in Subsection~\ref{sec:proof-proposition}), and third to
prove~\eqref{eq:expression-of-alpha} and that $\int y\,\alpha(dy)<\infty$ (in Subsection~\ref{sec:proof-second-part}). We next give
in Subsection~\ref{sec:proof-prop-jump} the proof of Proposition~\ref{prop:jump} on processes with jumps.

\subsection{Proof of the equivalence between (A) and (B)}
\label{sec:proof-first-part}

Note that~\cite[Thm.\,2.1]{ChampagnatVillemonais2016} also assumes that
\begin{align}
  \label{eq:step0}
  \P_x(t<\tau_\d)>0,\quad \forall x>0,\quad \forall t>0,
\end{align}
so we need first to check that this is true for our diffusion processes. This follows easily from the strong Markov and the
regularity properties of $X$ (it is for example enough to use that $\P_x(T_{x/2}<+\infty)>0$ and $\P_{x/2}(T_x<+\infty)>0$).

We actually prove below that (A1) and (B) are equivalent and that these properties imply (A2). In other words, for one-dimensional
diffusion processes,~\eqref{eq:expo-cv} is actually equivalent to~(A1) alone.

We first prove that (A1) implies (B). If~(A1) holds true, there exist $0<a<b$ such that
  \begin{equation}
    \label{eq:A1}
    \inf_{x>0}\P_x(X_{t_1}\in [a,b]\mid t_1<\tau_\d)=\underline{c}>0.
  \end{equation}
  Now, for all $x>b$,
  $$
  \frac{\P_x(T_b<t_1)}{\P_b(t_1<\tau_\d)}\geq\frac{\P_x(X_{t_1}\in[a,b])}{\P_b(t_1<\tau_\d)}\geq\P_x(X_{t_1}\in[a,b]\mid t_1<\tau_\d).
  $$
  Since we proved above that $\P_b(t_1<\tau_\d)>0$, we deduce that $\inf_{x>b}\P_x(T_b<t_1)>0$, i.e.\ that $\infty$ is an entrance
  boundary for $X$. Equation~\eqref{eq:A1} also implies that, for all $x<a$,
  \begin{align*}
    \P_x(t_1<\tau_\d) & \leq\frac{\P_x(X_{t_1}\in [a,b])}{\underline{c}} \\
    & \leq\frac{\E_x(X_{t_1\wedge T_a})}{a\underline{c}}=\frac{x}{a\underline{c}}.
  \end{align*}
  Hence (i) is proved.

  The difficult part of the proof is the implication (B)$\Rightarrow$(A).

  \medskip
  \noindent
  \textit{Step 1: the conditioned process escapes a neighborhood of 0 in finite time.}\\
  The goal of this step is to prove that there exists $\varepsilon,c>0$ such that
  \begin{equation}
    \label{eq:step1}
    \P_x(X_{t_1}\geq\varepsilon\mid t_1<\tau_\d)\geq c,\quad\forall x>0.    
  \end{equation}

  To prove this, we first observe that, since $X$ is a local martingale, for all $x\in(0,1)$,
  $$
  x=\E_x(X_{t_1\wedge T_1})=\P_x(t_1<\tau_\d)\E_x(X_{t_1\wedge T_1}\mid t_1<\tau_\d)+\P_x(T_1<\tau_\d\leq t_1).
  $$
  By the Markov property,
  \begin{align*}
  \P_x(T_1<\tau_\d\leq t_1)&\leq \EE_x\left[\mathbbm{1}_{T_1<\tau_\d\wedge t_1}\P_1(\tau_\d\leq t_1)\right]\\
  &\leq \P_x(T_1<\tau_\d)  \P_1(\tau_\d\leq t_1)\\
  &=x \P_1(\tau_\d\leq t_1).
  \end{align*}
  Hence~\eqref{eq:hyp-diffusion} entails
  $$
  \E_x(1-X_{t_1\wedge T_1}\mid t_1<\tau_\d)\leq 1-\frac{1}{A'},
  $$
  where $A':=A/\P_1(t_1<\tau_\d)$. Note that, necessarily, $A'>1$. Markov's inequality then implies that, for all $x\in(0,1)$,
  \begin{equation}
    \label{eq:calcul2}
    \P_x(X_{t_1\wedge T_1}\leq \frac{1}{2A'-1}\mid t_1<\tau_\d)\leq \frac{1-1/A'}{1-1/(2A'-1)}=1-\frac{1}{2A'}.    
  \end{equation}
  Since $\P_{1/(2A'-1)}(t_1<\tau_\d)>0$, there exists $\varepsilon\in(0,1/(2A'-1))$ such that
  \begin{equation}
    \label{eq:def-epsilon}
    \P_{1/(2A'-1)}(t_1<T_\varepsilon)>0.    
  \end{equation}
  Hence, for all $x\in(0,1)$,
  \begin{align*}
    \P_x(X_{t_1}\geq\varepsilon) & \geq\P_x(T_{1/(2A'-1)}<t_1)\P_{1/(2A'-1)}(T_\varepsilon>t_1) \\ & \geq\P_x(X_{t_1\wedge T_1}\geq 1/(2A'-1))
    \P_{1/(2A'-1)}(T_\varepsilon>t_1) \\ & \geq \frac{\P_x(t_1<\tau_\d)}{2A'}\P_{1/(2A'-1)}(T_\varepsilon>t_1)
  \end{align*}
  by~\eqref{eq:calcul2}. This ends the proof of~\eqref{eq:step1} for $x<1$. For $x\geq 1>1/(2A'-1)>\varepsilon$, the continuity and
  the strong Markov property for $X$ entail
  $$
  \P_x(X_{t_1}>\varepsilon\mid t_1<\tau_\d)\geq\P_x(X_{t_1}>\varepsilon)\geq\P_x(T_\varepsilon>t_1)\geq\P_{1/(2A'-1)}(T_\varepsilon>t_1)>0.
  $$
  Hence~\eqref{eq:step1} is proved.

  \medskip
  \noindent
  \textit{Step 2: Construction of coupling measures for the unconditioned process.}\\
  Our goal is to prove that there exist two constants $t_2,c_1>0$ such that, for all $x\geq\varepsilon$,
  \begin{equation}
    \label{eq:step2}
    \P_x(X_{t_2}\in \cdot)\geq c_1\nu,
  \end{equation}
  where 
  $$
  \nu=\P_\varepsilon(X_{t_2}\in\cdot\mid t_2<\tau_\d).
  $$
  This kind of relations can be obtained with classical coupling arguments, which we detail here for completeness. Fix
  $x\geq\varepsilon$ and construct two independent diffusions $X^\varepsilon$ and $X^x$ with speed measure $m(dx)$, and initial value
  $\varepsilon$ and $x$ respectively. Let $\theta=\inf\{t\geq 0:X^\varepsilon_t=X^x_t\}$. By the strong Markov property, the process
  $$
  Y^x_t=
  \begin{cases}
    X^x_t & \text{if }t\leq\theta, \\
    X^\varepsilon_t & \text{if }t>\theta
  \end{cases}
  $$
  has the same law as $X^x$. Since $\theta\leq\tau^x_\d:=\inf\{t\geq 0: X^x_t=0\}$, for all $t>0$,
  $\P(\theta<t)\geq\P(\tau^x_\d<t)$. Since $\infty$ is an entrance boundary and $0$ is accessible for $X$, there exists $t_2>0$ such
  that
  $$
  \inf_{y>0}\P_y(\tau_\d<t_2)=c'_1>0.
  $$
  Hence
  $$
  \P_x(X_{t_2}\in\cdot)=\P(Y^x_{t_2}\in\cdot)\geq\P(X^\varepsilon_{t_2}\in\cdot,\ \tau^x_\d<t_2)\geq c'_1\P_\varepsilon(X_{t_2}\in \cdot).
  $$
  Therefore, \eqref{eq:step2} is proved with $c_1=c'_1\P_\varepsilon(t_2<\tau_\d)$.

  \medskip
  \noindent
  \textit{Step 3: Proof of (A1).}\\
  Using successively the Markov property, Step 2 and Step 1, we have for all $x>0$
  \begin{align*}
    \P_x(X_{t_1+t_2}\in\cdot\mid t_1+t_2<\tau_\d) & \geq \P_x(X_{t_1+t_2}\in\cdot\mid t_1<\tau_\d) \\ 
    & \geq \int_\varepsilon^\infty \P_y(X_{t_2}\in\cdot)\P_x(X_{t_1}\in dy\mid t_1<\tau_\d) \\
    & \geq c_1\int_\varepsilon^\infty \nu(\cdot)\P_x(X_{t_1}\in dy\mid t_1<\tau_\d) \\
    & =c_1\nu(\cdot)\P_x(X_{t_1}\geq\varepsilon\mid t_1<\tau_\d)\geq c_1 c \nu(\cdot).
  \end{align*}
  This entails (A1) with $t_0=t_1+t_2$. Hence we have proved the equivalence between (A1) and (B).

  \medskip
  \noindent
  \textit{Step 4: Proof of (A2).}\\  
  The general idea of the proof is close to the case of birth and death processes in~\cite{ChampagnatVillemonais2016}.  
  
  For all $0<a<b<\infty$, we have
  $$
  \E_x (T_a\wedge T_b)=2\int_a^b\frac{(x\wedge y-a)(b-x\vee y)}{b-a}\,m(dy),\quad \forall a<x<b.
  $$
  Hence,
  $$
  \E_x (T_a\wedge T_b)\leq 2\int_a^b(x\wedge y-a)\,m(dy)\leq 2 \int_a^\infty  y \,m(dy).
  $$
 Since the process is non-explosive, the left hand side converges to $\E_x(T_a)$ when $b\rightarrow\infty$. But $\int_0^\infty y\,m(dy)<\infty$ by assumption, so that, for all $\varepsilon>0$, there exists $a_\varepsilon>0$ such that
  \begin{align*}
    \sup_{x\geq a_\varepsilon}\E_x(T_{a_\varepsilon})\leq \varepsilon.
  \end{align*}
  Therefore, $\sup_{x\geq a_\varepsilon}\P_x(T_{a_\varepsilon}\geq 1)\leq\varepsilon$ and, applying recursively the Markov property,
  $\sup_{x\geq a_\varepsilon}\P_x(T_{a_\varepsilon}\geq k)\leq\varepsilon^k$. Then, for all $\lambda>0$, there exists $y_\lambda\geq 1$ such
  that
  \begin{equation}
    \label{eq:moment-expo}
    \sup_{x\geq y_\lambda}\E_x(e^{\lambda T_{y_\lambda}})<+\infty.  
  \end{equation}

The proof of the following lemma is postponed to the end of this section.
\begin{lem}
\label{le:lemme1}
There exists $a>0$ such that $\nu([a,+\infty[)>0$ and, for all $k\in\N$,
\begin{align*}
\P_a(X_{kt_0}\geq a)\geq e^{-\rho k t_0},
\end{align*}
with $\rho>0$.
\end{lem}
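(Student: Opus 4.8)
The plan is to choose $a>0$ first, so that $\nu([a,\infty))>0$; this is immediate since $\nu$ is a probability measure on $(0,\infty)$, so that $\nu([a,\infty))\uparrow 1$ as $a\downarrow 0$ and any sufficiently small $a$ works. The substance of the lemma is the exponential lower bound, and the key reduction is to require the process to lie above $a$ only at the discrete times $t_0,2t_0,\ldots,kt_0$. Since paths are continuous,
$$\P_a(X_{kt_0}\geq a)\geq \P_a\big(X_{jt_0}\geq a,\ \forall j=1,\ldots,k\big),$$
and the Markov property applied successively at $t_0,2t_0,\ldots$ turns the right-hand side into a product bounded below by $q^k$, where $q:=\inf_{y\geq a}\P_y(X_{t_0}\geq a)$. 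Setting $\rho:=-\ln(q)/t_0$ then gives $\P_a(X_{kt_0}\geq a)\geq q^k=e^{-\rho k t_0}$, so everything reduces to proving $q>0$.

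The main obstacle is precisely this uniform positivity of $q$ over \emph{all} starting points $y\geq a$, since at the intermediate times the process may sit arbitrarily high. I would fix some $b>a$ and split into two regimes. For $a\leq y\leq b$, I force the process to climb to $b$ before time $t_0$ and then to stay above $a$: applying the strong Markov property at $T_b$ and using $\P_b(T_a>t_0-T_b)\geq\P_b(T_a>t_0)$ on $\{T_b\leq t_0\}$ gives
$$\P_y(X_{t_0}\geq a)\geq \P_y(T_b\leq t_0)\,\P_b(T_a>t_0)\geq \P_a(T_b\leq t_0)\,\P_b(T_a>t_0),$$
the last step using monotonicity of $y\mapsto\P_y(T_b\leq t_0)$ on $[a,b]$. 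For $y\geq b$, continuity of paths forces the process to cross $b$ before it can reach $a$, so $\{X_{t_0}<a\}\subseteq\{T_a\leq t_0\}$ and hence $\P_y(X_{t_0}\geq a)\geq\P_y(T_a>t_0)\geq\P_b(T_a>t_0)$, again by stochastic monotonicity of $T_a$ in the starting point. Combining the two regimes yields $q\geq \P_a(T_b\leq t_0)\,\P_b(T_a>t_0)$.

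It then remains to check that both factors are strictly positive, which follows from regularity and path-continuity of $X$ in the same spirit as the justification of~\eqref{eq:step0}: $\P_a(T_b\leq t_0)>0$ because the regular diffusion started at $a$ reaches the higher level $b$ within any positive time with positive probability, and $\P_b(T_a>t_0)>0$ because, starting strictly above $a$, continuity of paths guarantees a positive probability of not reaching $a$ before time $t_0$. The only ingredient requiring care is the stochastic monotonicity of $T_a$ (resp.\ $T_b$) in the initial condition, which for one-dimensional diffusions is standard and can be obtained by a pathwise (coupling) ordering argument, two copies of $X$ started at ordered points being unable to cross without first meeting. With $q>0$ established, the exponential bound and the value of $\rho$ follow as in the first paragraph.
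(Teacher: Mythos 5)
Your skeleton coincides with the paper's up to the crux: pick $a$ small so that $\nu([a,+\infty[)>0$, reduce to the one-step quantity $q=\inf_{y\geq a}\P_y(X_{t_0}\geq a)$ by iterating the Markov property, and use stochastic monotonicity (coupling of ordered copies) to push estimates to endpoints. Where you genuinely diverge is in proving $q>0$. The paper does this in one line: the lemma is invoked inside Step~4, \emph{after} Step~3 has established the minorization~(A1), so that $\P_a(X_{t_0}\geq a)\geq c_1\nu([a,+\infty[)\,\P_a(t_0<\tau_\d)$, which is positive by~\eqref{eq:step0}; this quantity is then simply \emph{defined} to be $e^{-\rho t_0}$. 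You instead rebuild positivity from scratch via the hitting-time decomposition $q\geq \P_a(T_b\leq t_0)\,\P_b(T_a>t_0)$; the regime split and the strong Markov estimates in that reduction are correct.

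The gap lies in the two positivity claims, both of which concern the \emph{fixed} time $t_0$ supplied by~(A1). Regularity gives $\P_a(T_b<\infty)>0$, hence $\P_a(T_b\leq s_0)>0$ for all sufficiently large $s_0$ --- which is exactly the weaker form the paper permits itself in Step~4 (``for $s_0$ large enough'') --- but it does \emph{not} give $\P_a(T_b\leq t_0)>0$ for an arbitrary prescribed $t_0>0$: that a general diffusion can climb from $a$ to $b$ arbitrarily fast with positive probability is a strictly stronger statement, true for regular diffusions but requiring a real argument (for instance smallness of Brownian local times in the time-change representation, or first shrinking $b$ toward $a$ so that $\P_a\bigl(\sup_{s\leq t_0}X_s\geq b\bigr)$ is close to $1$). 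Similarly, $\P_b(T_a>t_0)>0$ does not follow from path continuity alone (a deterministic continuous descent would violate it); it is precisely the statement~\eqref{eq:step0} for the diffusion stopped at level $a$, and needs the same oscillation-type justification via the strong Markov property and regularity. Both claims are true and your argument is repairable --- take $b$ close to $a$ for the first, and argue the second as~\eqref{eq:step0} is argued --- but as written they are asserted rather than proved, and the shortest repair is the one the paper uses: Assumption~(A1), already available at this point of the proof, yields $q>0$ immediately.
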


Take $a$ as in the previous lemma.
From~\eqref{eq:moment-expo}, one can choose $b> a$ large enough so that
\begin{align}
  \label{eq:moment-expo-2}
  \sup_{x\geq b}\E_x\left(e^{\rho T_{b}}\right)<\infty.
\end{align}
We are also going to make use of two inequalities. Since
\begin{align*}
\P_a(t<\tau_\d)\geq \P_a(T_b<s_0)\P_b(t<\tau_\d)
\end{align*}
for all $s_0\geq 0$, we obtain the first inequality: $\forall t\geq 0$,
\begin{align}
\label{eq:ineg1}
\sup_{x\in[a,b]} \P_x(t<\tau_\d)=\P_b(t<\tau_\d)\leq C\P_a(t<\tau_\d)=C\inf_{x\in[a,b]}\P_x(t<\tau_\d),
\end{align}
where $C$ is a positive constant. We recall that the regularity assumption ensures that $\P_a(T_b<s_0)>0$ for $s_0$ large enough. The
second inequality is an immediate consequence of the Markov property: $\forall s<t$,
\begin{align}
\P_a(X_{\lceil s/t_0 \rceil t_0}\geq a)\P_a(t-s<\tau_\d) & =
\P_a(X_{\lceil s/t_0 \rceil t_0}\geq a)\inf_{x\in[a,\infty)}\P_x(t-s<\tau_\d) \notag \\ & \leq \P_a(t<\tau_\d).
\label{eq:ineg2}
\end{align}

In the following computation, we use successively~\eqref{eq:moment-expo-2},~\eqref{eq:ineg1} and~\eqref{eq:ineg2}. For all $x\geq b$,
with a constant $C>0$ that may change from line to line,
\begin{align*}
\P_x(t<\tau_\d)&\leq \P_x(t< T_b)+\int_{0}^t \P_b(t-s<\tau_\d)\,\P_x(T_b\in ds)\\
&\leq Ce^{-\rho t}+C\int_0^t \P_a(t-s<\tau_\d)\,\P_x(T_b\in ds)\\
&\leq Ce^{-\rho (\lceil t/t_0 \rceil -1)t_0}+C \P_a(t<\tau_\d)\int_0^t \frac{1}{\P_a(X_{\lceil s/t_0 \rceil t_0}\geq a)}\,\P_x(T_b\in ds)\\
&\leq C\P_a(t<\tau_\d)+C \P_a(t<\tau_\d)\int_0^t e^{\rho(s+t_0)}\,\P_x(T_b\in ds),
\end{align*}
where we used twice Lemma~\ref{le:lemme1} in the last line. We deduce form~\eqref{eq:moment-expo-2} that, for all $t\geq 0$,
\begin{align*}
\sup_{x\geq b}\P_x(t<\tau_\d)\leq C \P_a(t<\tau_\d).
\end{align*}
Since $\nu([a,+\infty[)>0$, this ends the proof of~(A2).

\begin{proof}[Proof of Lemma~\ref{le:lemme1}]
Fix $a>0$ such that $\nu([a,+\infty[)>0$. Step 3 of the previous proof and~\eqref{eq:step0} entail
\begin{align*}
\P_a(X_{t_0}\geq a)\geq c_1\nu([a,+\infty[)\P_a(t_0<\tau_\d)\stackrel{\text{def}}{=}e^{-\rho t_0}>0.
\end{align*}
Now, using Markov property,
\begin{align*}
\P_a(X_{kt_0}\geq a)\geq \left(\inf_{x\geq a} \P_x(X_{t_0}\geq a)\right)^k.
\end{align*}
Since $\inf_{x\geq a} \P_x(X_{t_0}\geq a)=\P_a(X_{t_0}\geq a)$ by coupling arguments, the proof is completed.
\end{proof}

\subsection{Proof of Proposition~\ref{prop:2}}
\label{sec:proof-proposition}

Since (A) is equivalent to (B), the convergence in~\eqref{eq:convergence-to-eta} for the uniform normis a direct consequence of
Proposition~\cite[Prop~2.3]{ChampagnatVillemonais2016}. This proposition also entails that $\eta$ is bounded, positive on
$(0,+\infty)$ and that $\alpha(f)=1$. The fact that $\eta$ is non-decreasing comes from~\eqref{eq:convergence-to-eta} and from the
fact that $\PP_x(t<\tau_\d)$ is non-decreasing in $x\geq 0$ by standard comparison arguments. The fact that $\eta(x)\leq Cx$ follows
from assumption (B) since
$$
P_{t_1}\eta(x)\leq \|\eta\|_\infty\PP_x(t_1\leq\tau_\d)\leq \|\eta\|_\infty A x,\quad\forall x\geq 0.
$$

It only remains to prove~\eqref{eq:formula_eta}. For all measurable $f\geq 0$ and all $0\leq a<x<b\leq \infty$,
\begin{align}
\label{eq:green-function}
\E_x\left(\int_{0}^{T_a\wedge T_b}f(X_t)\,dt\right)
=2\int_a^b \frac{(x\wedge y-a)(b-x\vee y)}{b-a}\,f(y)\,m(dy).
\end{align}
For $a=0$ and $b=\infty$, we obtain
\begin{align}
\label{eq:green-function_0_infty}
\E_x\left(\int_{0}^{\tau_\d}f(X_t)\,dt\right)
=2\int_0^\infty (x\wedge y)\,f(y)\,m(dy).
\end{align}
For $f=\eta$, we deduce that
\begin{align*}
\int_0^\infty \E_x\left(\eta(X_t)\11_{t<\tau_\d}\right)\,dt
&=2\int_0^\infty (x\wedge y)\,\eta(y)\,m(dy).
\end{align*}
Since $\eta(0)=0$ and $L\eta=-\lambda_0\eta$, we have $\E_x\left(\eta(X_t)\11_{t<\tau_\d}\right)=P_t\eta(x)=e^{-\lambda_0 t}\eta(x)$. Then
\begin{align*}
\frac{\eta(x)}{\lambda_0}=2\int_0^\infty (x\wedge y)\,\eta(y)\,m(dy).
\end{align*}
This entails~\eqref{eq:formula_eta} provided we prove that $\alpha(dx)$ is proportional to $\eta(x)m(dx)$ (observe that the
normalizing constant is determined by the condition $\alpha(\eta)=1$). This will be done in the next Subsection.

\subsection{Proof of~\eqref{eq:expression-of-alpha} and that $\int y\,\alpha(dy)<\infty$}
\label{sec:proof-second-part}

Integrating~\eqref{eq:green-function_0_infty} with respect to $\alpha(dx)$, we obtain
\begin{align*}
\int_0^\infty \E_{\alpha}\left(f(X_t)\11_{t<\tau_\d}\right)dt=2\int_0^\infty\int_0^\infty (x\wedge y)\,f(y)\,m(dy)\,\alpha(dx).
\end{align*}
Since $\E_{\alpha}\left(f(X_t)\11_{t<\tau_\d}\right)=\alpha(f)\,e^{-\lambda_0 t}$, we deduce that
\begin{align*}
\frac{\alpha(f)}{\lambda_0}=2\int_0^\infty\int_0^\infty (x\wedge y)\,f(y)\,m(dy)\,\alpha(dx).
\end{align*}
This entails~\eqref{eq:expression-of-alpha}. We now prove that $\alpha(dx)$ is proportional to $\eta(x)m(dx)$. 

The following reversibility result for diffusions on natural scale is more or less classical but we need a version with precise
bounds on the test functions in the case of a diffusion coming down from infinity and hitting 0 a.s.\ in finite time. The proof is
given at the end of the subsection for sake of completeness.

\begin{lem}
  \label{lem:reversibility}
  Let $X$ be a diffusion on $[0,\infty)$ in natural scale with speed measure $m$ satisfying~\eqref{eq:ymdy_integrable}. Then it is
  reversible with respect to $m$ in the sense that, for all bounded non-negative measurable functions $f$ on $(0,+\infty)$ and all
  nonnegative measurable function $g$ on $(0,+\infty)$ such that $g(x)\leq Cx$ for some $C>0$,
  \begin{equation}
    \label{eq:reversibility}
    \int_0^\infty f(x)P_tg(x)m(dx)=\int_0^\infty g(x)P_tf(x) m(dx),\quad\forall t\geq 0,
  \end{equation}
  where both sides are finite.
\end{lem}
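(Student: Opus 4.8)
The plan is to approximate $X$ by the diffusions obtained by killing it upon leaving a bounded interval $(a,b)$, establish reversibility there from the symmetry of the explicit Green kernel in~\eqref{eq:green-function}, and then let $a\downarrow 0$ and $b\uparrow\infty$ by monotone convergence. I first record that both sides of~\eqref{eq:reversibility} are finite. Since $(X_{t\wedge\tau_\d})_{t\geq 0}$ is a nonnegative local martingale, hence a supermartingale, the assumption $g(x)\leq Cx$ gives
\begin{align*}
  P_tg(x)=\E_x\left(g(X_t)\mathbbm{1}_{t<\tau_\d}\right)\leq C\,\E_x\left(X_{t\wedge\tau_\d}\right)\leq Cx,
\end{align*}
so that, using $f$ bounded and~\eqref{eq:ymdy_integrable}, $\int_0^\infty f\,P_tg\,dm\leq C\|f\|_\infty\int_0^\infty x\,m(dx)<\infty$; symmetrically $P_tf\leq\|f\|_\infty$ and $g(x)\leq Cx$ give $\int_0^\infty g\,P_tf\,dm\leq C\|f\|_\infty\int_0^\infty x\,m(dx)<\infty$.

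For fixed $0<a<b<\infty$, let $P^{a,b}_th(x)=\E_x(h(X_t)\mathbbm{1}_{t<T_a\wedge T_b})$ denote the semigroup of $X$ killed on exiting $(a,b)$. On $(a,b)$ the speed measure $m$ is finite, and~\eqref{eq:green-function} shows that the associated $0$-potential $Gh(x)=\E_x(\int_0^{T_a\wedge T_b}h(X_s)\,ds)$ has the bounded, symmetric density $2(x\wedge y-a)(b-x\vee y)/(b-a)$ with respect to $m$; hence $G$ is a bounded self-adjoint operator on $L^2((a,b),m)$. The killed resolvent $U^{a,b}_\lambda=\int_0^\infty e^{-\lambda t}P^{a,b}_t\,dt$ satisfies $U^{a,b}_\lambda=G-\lambda\,G\,U^{a,b}_\lambda$, whence $U^{a,b}_\lambda=\sum_{n\geq 0}(-\lambda)^nG^{n+1}$ for $\lambda$ small; each $G^{n+1}$ is self-adjoint, so $U^{a,b}_\lambda$ is self-adjoint for $\lambda$ small and, by the resolvent identity, for every $\lambda>0$. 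By uniqueness of the Laplace transform together with the right-continuity of $t\mapsto P^{a,b}_th$, self-adjointness of $U^{a,b}_\lambda$ for all $\lambda>0$ yields, for all bounded measurable $f,g\geq 0$ and all $t\geq 0$,
\begin{align*}
  \int_a^b f(x)\,P^{a,b}_tg(x)\,m(dx)=\int_a^b g(x)\,P^{a,b}_tf(x)\,m(dx).
\end{align*}

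It remains to pass to the limit. Since $X$ has continuous paths, $T_b\to\infty$ as $b\uparrow\infty$, while $T_a\uparrow\tau_\d=T_0$ as $a\downarrow 0$; hence $\mathbbm{1}_{t<T_a\wedge T_b}\uparrow\mathbbm{1}_{t<\tau_\d}$ and, for $h\geq 0$, $P^{a,b}_th(x)\uparrow P_th(x)$ by monotone convergence. Writing each side of the finite-interval identity as an integral over $(0,\infty)$ against the nonnegative, non-decreasing integrands $\mathbbm{1}_{(a,b)}(x)f(x)P^{a,b}_tg(x)$ and $\mathbbm{1}_{(a,b)}(x)g(x)P^{a,b}_tf(x)$, the monotone convergence theorem gives convergence to $\int_0^\infty f\,P_tg\,dm$ and $\int_0^\infty g\,P_tf\,dm$ respectively, which are finite by the first paragraph. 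This proves~\eqref{eq:reversibility}.

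The only genuinely delicate point is the finite-interval reversibility of the second paragraph, namely passing from the symmetry of the $0$-potential kernel to the symmetry of the \emph{killed semigroup}, and I expect this to be the main obstacle. The cleanest route is the one above: on the bounded interval $m$ is finite and the Green kernel is bounded, so $G$ is a genuine self-adjoint operator and the resolvent symmetry propagates to the semigroup by Laplace inversion, avoiding any appeal to Sturm--Liouville theory. The growth of $g$ and the possible infiniteness of $m$ near $0$ never enter at this stage; they are controlled only in the final passage to the limit, where the a priori finiteness bounds of the first paragraph guarantee that the monotone limits are finite.
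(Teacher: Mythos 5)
Your proposal is correct, but it takes a genuinely different route from the paper. The paper works directly on $[0,\infty)$: it reduces by density to $f,g$ compactly supported in the domain $\mathcal{D}$ of the generator (using Freedman's characterization $Lf=df^+/dm$), and then shows that $s\mapsto\int_0^\infty P_sf\,P_{t-s}g\,dm$ has zero derivative via integration by parts for Stieltjes integrals, i.e.\ it proves symmetry at the level of the generator. You instead localize to $(a,b)$, exploit the explicit symmetric Green kernel of~\eqref{eq:green-function} to get a bounded self-adjoint potential operator $G$ on $L^2((a,b),m)$, propagate self-adjointness from $G$ to the killed resolvent by the Neumann series and resolvent identity, recover the killed semigroup by Laplace inversion, and finally remove the killing by monotone convergence. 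Your approach buys a proof that never touches generator-domain technicalities and uses only the Green formula already recorded in the paper plus soft operator theory; the paper's approach avoids the localization/limiting procedure and all resolvent machinery, at the cost of invoking the characterization of $\mathcal{D}$ and a differentiation under the integral sign. Two points in your write-up deserve tightening, though neither is a real obstruction: (i) right-continuity of $t\mapsto P^{a,b}_th(x)$ is clear from path continuity only for \emph{continuous} bounded $h$ (for merely measurable $h$ it can fail pointwise), so you should first establish the finite-interval symmetry for continuous $f,g$ and then extend to bounded measurable ones by a monotone class argument at fixed $t$; (ii) the extension of self-adjointness of $U^{a,b}_\lambda$ from small $\lambda$ to all $\lambda>0$ needs the uniform bound $\|U^{a,b}_\mu\|\leq\|G\|$ (which follows from positivity, $|U_\mu h|\leq U_\mu|h|\leq G|h|$) so that the Neumann series steps in the resolvent identity have a uniform radius, or alternatively analytic continuation of the matrix elements $\lambda\mapsto\langle f,U_\lambda g\rangle$.
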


Applying this lemma for $g=\eta$, we obtain that, for all bounded measurable non-negative $f$,
$$
\int_0^\infty f(x)\eta(x)m(dx)=e^{\lambda_0 t}\int_0^\infty \eta(x)P_tf(x)m(dx)
$$
Now, it follows from~\eqref{eq:expo-cv} and~\eqref{eq:convergence-to-eta} that
$$
e^{\lambda_0 t}P_t f(x)=e^{\lambda_0 t}\PP_x(t<\tau_\d)\EE_x(f(X_t)\mid t<\tau_\d)\rightarrow \eta(x) \alpha(f)
$$
when $t\rightarrow+\infty$, where the convergence is uniform in $x$. Since $\int\eta\,dm<\infty$, Lebesgue's theorem entails that
$$
\alpha(f)\int_0^\infty\eta^2(x) m(dx)=\int_0^\infty f(x)\eta(x)m(dx)
$$
for all bounded measurable $f$. Hence $\alpha\propto \eta\,dm$. Note also that $f=\eta$ gives $\int_0^\infty \eta\,dm=\int_0^\infty
\eta^2\,dm$.

It only remains to prove that $\int y\,\alpha(dy)<\infty$. Since $\eta$ is bounded and
$$
\eta(x)\propto\int_0^\infty (x\wedge y)\,\alpha(dy),
$$
this follows from the monotone convergence theorem.

\begin{proof}[Proof of Lemma~\ref{lem:reversibility}]
We start by proving that both sides of~\eqref{eq:reversibility} are finite. This is obvious for the r.h.s.\ because
of~\eqref{eq:ymdy_integrable}. For the l.h.s., since the positive local martingale $(X_t,t\geq 0)$ is a supermartingale, we have
$P_tg(x)=\E_x[g(X_t)\11_{t<\tau_\d}]\leq C\E_x[X_t]\leq Cx$, which allows to conclude.

By Lebesgue's theorem, it is enough to prove~\eqref{eq:reversibility} for a $f,g$ in a dense subset of the set of continuous
functions on $(0,+\infty)$ with compact support. Note that a function $g$ with compact support in $(0,+\infty)$ satisfies $g(x)\leq C
x$ for some $C>0$. For all $s\in[0,t]$, we define
$$
\psi(s)=\int_0^\infty P_sf(x)\,P_{t-s}g(x)\,m(dx).
$$

We use the characterization of the infinitesimal generator of diffusion processes of~\cite[Thm.\,2.81]{freedman-83}: let $\mathcal{D}$
be the set of functions $f$ bounded continuous on $[0,\infty)$, such that the right derivative $f^+$ of $f$ exists, is finite,
continuous from the right and of bounded variation on all compact intervals of $(0,\infty)$, and such that $df^+=h\,dm$, where $df^+$
denotes the measure on $(0,\infty)$ such that
$$
f^+(y)-f^+(x)=df^+(x,y]
$$
and $h$ is some bounded continuous function on $[0,\infty)$ with $h(0)=0$. Then, for all $f\in\mathcal{D}$, 
$$
Lf(x):=\lim_{t\rightarrow 0}\frac{P_tf(x)-f(x)}{t}=\frac{df^+}{dm}(x),\quad\forall x\geq 0,
$$
where the convergence holds for the uniform norm on $[0,\infty)$.

So let $f,g\in \mathcal{D}$ have compact support. In particular, there exists $C>0$ such that $P_sf(x)+P_sg(x)\leq Cx$ for all
$x,s\geq 0$. Since $f,g\in\mathcal{D}$, $P_sf$ and $P_sg$ also belong to $\mathcal{D}$ and Lebesgue's theorem entails that, for all
$s\in[0,t]$
\begin{align*}
  \psi'(s) & =\int_0^\infty LP_s f(x)\,P_{t-s}g(x)\,m(dx)-\int_0^\infty P_s f(x)\,LP_{t-s}g(x)\,m(dx) \\
  & =\int_0^\infty P_{t-s}g(x)\,d(LP_s f)^+(x)-\int_0^\infty P_s f(x)\,d(LP_{t-s}g)^+(x).
\end{align*}
The right-hand side is equal to zero by the integration by parts formula for Stieljes integrals.

Since $\mathcal{D}$ is dense in the set of continuous functions on $(0,\infty)$ with compact support, the proof is completed.
\end{proof}

\subsection{Proof of Proposition~\ref{prop:jump}}
\label{sec:proof-prop-jump}

The proof is similar to the one of Theorem~\ref{thm:QSD_full} (see Subsection~\ref{sec:proof-first-part}) and we only detail the
steps that need to be modified. Our aim is to check that conditions (A1) and (A2) hold.

\bigskip\noindent
\textit{Step 1. (A1) is satisfied.}

Since $m$ satisfies the conditions of Theorem~\ref{thm:QSD_full}, we deduce that (A1) holds for $X$. As a consequence, there exist
two constants $c_1>0$ and $t_0>0$ and a probability measure $\nu$ on $(0,+\infty)$, such that, for all $A\subset(0,\infty)$ measurable,
\begin{align*}
\P_x(X_{t_0}\in A)\geq c_1\nu(A)\P_x(t_0<\tau_\d),\quad \forall x\in(0,+\infty).
\end{align*}
By construction of $\tX$, we have
\begin{align*}
\P_x(\tX_{t_0}\in A)\geq e^{-t_0} \P_x(X_{t_0}\in A).
\end{align*}
Fix $x\in(0,1)$. Using the fact that $X_t=\tX_t$ for all $t\leq T_1$ under $\P_x$, we deduce that
\begin{align*}
\P_x(t_0<\widetilde{\tau_\d})&\leq \P_x(\{t_0<\tau_\d\}\cup\{\tX_{t_0}\neq X_{t_0}\})\\
&\leq \P_x(t_0<\tau_\d)+\P_x(T_1\leq \tau_\d)\\
&\leq Ax+x\\
&\leq \frac{A+1}{a}\P_x(t_0<\tau_\d),
\end{align*}
where $a>0$ is the positive constant from Remark~\ref{rem:cond-(B)}. As a consequence, for all $A\subset(0,\infty)$
measurable,
\begin{align}
\label{eq:comp-tX-X}
\P_x(\tX_{t_0}\in A)\geq \frac{c_1 a e^{-t_0}}{A+1}\,\nu(A)\,\P_x(t_0<\widetilde{\tau_\d}),
\end{align}
which concludes the proof of~(A1) for $\widetilde{X}$.

\bigskip\noindent \textit{Step~2. (A2) is satisfied.}

By construction of the process $\tX$ from the process $X$ and an independent Poisson process, it is clear that, for all $x\leq y$,
all $t>0$ and $a\in(0,x)$, we have
\begin{align*}
  \P_x(\tT_a\leq t)\geq \P_y(\tT_a\leq t),
\end{align*}
where $\tT_a=\inf\{t\geq 0,\ \tX_t=a\}$, and that $\P_x(\tX_t\geq a)\geq \P_x(X_t\geq a)$. Hence we only need to prove that $\tX$
satisfies~\eqref{eq:moment-expo}, the rest of the proof being the same as in Step~4 of Subsection~\ref{sec:proof-first-part}.

Fix $\varepsilon>0$ and set $t_\varepsilon=-\log(1-\varepsilon/2)>0$. For all $a>0$, we have, by independence of the Poisson process
$(N_t)_{t\geq 0}$,
\begin{align*}
  \inf_{x\in(a,+\infty)} \P_x(\tT_a\leq t_\varepsilon) =\lim_{x\rightarrow\infty}\P_x(\tT_a\leq t_\varepsilon)\geq
  \left(1-\frac{\varepsilon}{2}\right)\lim_{x\rightarrow\infty}\P_x(T_a\leq t_\varepsilon).
\end{align*}
Since $X$ comes down from infinity, there exists $a_\varepsilon>0$ such that
$$
\lim_{x\rightarrow+\infty}\P_x(T_{a_\varepsilon}\leq t_\varepsilon)\geq 1-\frac{\varepsilon}{2}.
$$
Hence, for all $\varepsilon>0$ small enough, there exists $a_\varepsilon>0$ such that, for all $x\geq a_\varepsilon$,
\begin{align*}
  \P_x(1<\tT_{a_\varepsilon})\leq \P_x(t_\varepsilon<\tT_{a_\varepsilon})\leq 1-\left(1-\frac{\varepsilon}{2}\right)^2\leq\varepsilon.
\end{align*}
This entails~\eqref{eq:moment-expo} and (A2) as in Step~4 of the proof of Theorem~\ref{thm:QSD_full}.

\bibliographystyle{abbrv}
\bibliography{biblio-bio,biblio-denis,biblio-math,biblio-math-nicolas}

\def\cprime{$'$} \def\cprime{$'$} \def\cprime{$'$} \def\cprime{$'$}
  \def\polhk#1{\setbox0=\hbox{#1}{\ooalign{\hidewidth
  \lower1.5ex\hbox{`}\hidewidth\crcr\unhbox0}}} \def\cprime{$'$}
  \def\lfhook#1{\setbox0=\hbox{#1}{\ooalign{\hidewidth
  \lower1.5ex\hbox{'}\hidewidth\crcr\unhbox0}}} \def\cprime{$'$}
  \def\cprime{$'$}
\begin{thebibliography}{10}

\bibitem{amir-91}
M.~Amir.
\newblock Sticky brownian motion as the strong limit of a sequence of random
  walks.
\newblock {\em Stochastic Processes and their Applications}, 39(2):221 -- 237,
  1991.

\bibitem{Cattiaux2009}
P.~Cattiaux, P.~Collet, A.~Lambert, S.~Martinez, S.~M{\'e}l{\'e}ard, and
  J.~San~Martin.
\newblock Quasi-stationary distributions and diffusion models in population
  dynamics.
\newblock {\em Ann. Probab.}, 37(5):1926--1969, 2009.

\bibitem{ChampagnatVillemonais2016}
N.~Champagnat and D.~Villemonais.
\newblock Exponential convergence to quasi-stationary distribution and
  {Q}-process.
\newblock {\em Probability Theory and Related Fields}, 164(1):243--283, 2016.

\bibitem{champagnat-villemonais-16c}
N.~Champagnat and D.~Villemonais.
\newblock Uniform convergence of penalized time-inhomogeneous markov processes.
\newblock Arxiv preprint, 2016.

\bibitem{ChampagnatVillemonais2016U}
N.~{Champagnat} and D.~{Villemonais}.
\newblock {Uniform convergence to the Q-process}.
\newblock {\em ArXiv e-prints}, Nov. 2016.

\bibitem{freedman-83}
D.~Freedman.
\newblock {\em Brownian motion and diffusion}.
\newblock Springer-Verlag, New York-Berlin, second edition, 1983.

\bibitem{kolb-hening-2104}
A.~{Hening} and M.~{Kolb}.
\newblock {Quasistationary distributions for one-dimensional diffusions with
  singular boundary points}.
\newblock {\em ArXiv e-prints}, Sept. 2014.

\bibitem{ito-mckean-74}
K.~It{\^o} and H.~P. McKean, Jr.
\newblock {\em Diffusion processes and their sample paths}, volume 125 of {\em
  Die Grundlehren der mathematischen Wissenschaften}.
\newblock Springer-Verlag, Berlin, 1974.
\newblock Second printing, corrected.

\bibitem{kallenberg-02}
O.~Kallenberg.
\newblock {\em Foundations of modern probability}.
\newblock Probability and its Applications (New York). Springer-Verlag, New
  York, second edition, 2002.

\bibitem{kolb-steinsaltz-12}
M.~Kolb and D.~Steinsaltz.
\newblock Quasilimiting behavior for one-dimensional diffusions with killing.
\newblock {\em Ann. Probab.}, 40(1):162--212, 2012.

\bibitem{kitani-06}
S.~Kotani.
\newblock On a condition that one-dimensional diffusion processes are
  martingales.
\newblock In {\em In memoriam {P}aul-{A}ndr\'e {M}eyer: {S}\'eminaire de
  {P}robabilit\'es {XXXIX}}, volume 1874 of {\em Lecture Notes in Math.}, pages
  149--156. Springer, Berlin, 2006.

\bibitem{legall-83}
J.-F. Le~Gall.
\newblock Applications du temps local aux \'equations diff\'erentielles
  stochastiques unidimensionnelles.
\newblock In {\em Seminar on probability, {XVII}}, volume 986 of {\em Lecture
  Notes in Math.}, pages 15--31. Springer, Berlin, 1983.

\bibitem{littin-12}
J.~Littin~C.
\newblock Uniqueness of quasistationary distributions and discrete spectra when
  {$\infty$} is an entrance boundary and 0 is singular.
\newblock {\em J. Appl. Probab.}, 49(3):719--730, 2012.

\bibitem{matsumoto-89}
H.~Matsumoto.
\newblock Coalescing stochastic flows on the real line.
\newblock {\em Osaka J. Math.}, 26(1):139--158, 1989.

\bibitem{meleard-villemonais-12}
S.~M{\'e}l{\'e}ard and D.~Villemonais.
\newblock Quasi-stationary distributions and population processes.
\newblock {\em Probab. Surv.}, 9:340--410, 2012.

\bibitem{Meyn2009}
S.~Meyn and R.~Tweedie.
\newblock {\em Markov chains and stochastic stability}.
\newblock Cambridge University Press New York, NY, USA, 2009.

\bibitem{miura-14}
Y.~Miura.
\newblock Ultracontractivity for {M}arkov semigroups and quasi-stationary
  distributions.
\newblock {\em Stoch. Anal. Appl.}, 32(4):591--601, 2014.

\bibitem{petrowski-ruf-2012}
N.~Perkowski and J.~Ruf.
\newblock Conditioned martingales.
\newblock {\em Electron. Commun. Probab.}, 17:no. 48, 12, 2012.

\bibitem{pollett-11}
P.~K. Pollett.
\newblock Quasi-stationary distributions: A bibliography.
\newblock available at http://www.maths.uq.edu.au/{\verb+~+}pkp/papers/qsds/,
  2011.

\bibitem{Protter2013}
P.~Protter.
\newblock A mathematical theory of financial bubbles.
\newblock In {\em Paris-Princeton Lectures on Mathematical Finance 2013},
  volume 2081 of {\em Lecture Notes in Mathematics}, pages 1--108. Springer
  International Publishing, 2013.

\bibitem{Revuz1999}
D.~Revuz and M.~Yor.
\newblock {\em Continuous martingales and {B}rownian motion}, volume 293 of
  {\em Grundlehren der Mathematischen Wissenschaften}.
\newblock Springer-Verlag, Berlin, third edition, 1999.

\bibitem{vanDoorn2013}
E.~A. van Doorn and P.~K. Pollett.
\newblock Quasi-stationary distributions for discrete-state models.
\newblock {\em European J. Oper. Res.}, 230(1):1--14, 2013.

\end{thebibliography}

\end{document}